\newcommand{\kk}{{\mathsf{k}}}
\newcommand{\PP}{{\mathbb{P}}}
\newcommand{\ZZ}{{\mathbb{Z}}}
\newcommand{\CA}{{\mathcal{A}}}
\newcommand{\CB}{{\mathcal{B}}}
\newcommand{\CC}{{\mathcal{C}}}
\newcommand{\CE}{{\mathcal{E}}}
\newcommand{\CF}{{\mathcal{F}}}
\newcommand{\CG}{{\mathcal{G}}}
\newcommand{\CH}{{\mathcal{H}}}
\newcommand{\CO}{{\mathcal{O}}}
\newcommand{\CP}{{\mathcal{P}}}
\newcommand{\CT}{{\mathcal{T}}}
\newcommand{\CU}{{\mathcal{U}}}
\newcommand{\CV}{{\mathcal{V}}}
\newcommand{\CW}{{\mathcal{W}}}
\newcommand{\RHom}{\mathop{\mathsf{RHom}}\nolimits}
\newcommand{\Hom}{\mathop{\mathsf{Hom}}\nolimits}
\newcommand{\Ext}{\mathop{\mathsf{Ext}}\nolimits}
\newcommand{\Gr}{{\mathsf{Gr}}}
\newcommand{\IGr}{{\mathsf{IGr}}}
\newcommand{\LGr}{{\mathsf{LGr}}}
\newcommand{\IFl}{{\mathsf{IFl}}}
\newcommand{\GL}{{\mathsf{GL}}}
\newcommand{\SP}{{\mathsf{Sp}}}
\newcommand{\lvee}[1]{\vphantom{#1}^\vee\!{#1}}
\newcommand{\YD}{\mathrm{Y}}
\newcommand{\bfG}{\mathbf{G}}
\newcommand{\sfP}{\mathsf{P}}
\newcommand{\CTE}{{\tilde{\mathcal{E}}}}
\DeclareMathOperator{\rk}{rk}
\theoremstyle{plain}
\newtheorem{theo}{Theorem}[]
\newtheorem{theorem}{Theorem}[section]
\newtheorem{lemma}[theorem]{Lemma}
\newtheorem{proposition}[theorem]{Proposition}
\newtheorem{corollary}[theorem]{Corollary}
\theoremstyle{definition}
\newtheorem{definition}[theorem]{Definition}
\theoremstyle{remark}
\newtheorem{remark}[theorem]{Remark}
\newtheorem{example}[theorem]{Example}
\title{Dual exceptional collections on Lagrangian Grassmannians}
\author{Anton Fonarev}
\address{\sloppy
\parbox{0.95\textwidth}{
Algebraic Geometry Section, Steklov Mathematical Institute of Russian Academy of Sciences,
8 Gubkin str., Moscow 119991 Russia
\hfill
}\bigskip}
\email{avfonarev@mi-ras.ru}
\date{}
\thanks{This work was supported by the Russian Science Foundation under grant no. 19-11-00164, https://rscf.ru/en/project/19-11-00164/}
\begin{document}

\begin{abstract}
    We construct graded left dual exceptional collections to the
    exceptional collections generating the blocks
    of Kuznetsov and Polishchuk
    on Lagrangian Grassmannians. As an application,
    we find explicit resolutions for some natural
    irreducible equivariant vector bundles.
\end{abstract}

\maketitle

\section{Introduction}
Derived categories of varieties are among the central objects
in modern algebraic geometry. A skeptical reader might complain
that the very notion of the derived category is too abstract,
and they might have a point. However, since the pioneering work
of Beilinson \cite{Beilinson1978}, derived categories have
become a great computational tool. Nowadays one would say that
Beilinson constructed a full exceptional collection in the bounded
derived categories of coherent sheaves on projective spaces.

The following
analogy is commonly used to explain the computational power of
exceptional collections. Given a finite dimensional real vector space $V$
with a positive definite symmetric bilinear form $\langle - ,- \rangle$,
one can find an orthonormal basis. That is, a basis consisting of vectors
$(v_1,v_2,\ldots,v_n)$ such that (i)~$\langle v_i, v_i\rangle = 1$ for all
$i=1,\ldots, n$ and (ii) $\langle v_i, v_j\rangle = 0$ whenever $i\neq j$.
The first condition says that the vectors are unit vectors, while
the second condition is the orthogonality condition.
Given such a basis, every vector $v\in V$ can be easily decomposed
with respect to our basis:
\begin{equation}\label{eq:orthonormal}
v = \langle v, v_1\rangle v_1 + \langle v, v_2\rangle v_2 + \cdots
+ \langle v, v_n\rangle v_n.
\end{equation}
Let us now relax some of the conditions;
for instance, symmetry. We still want a basis which consists of unit
vectors, but we have to modify the orthogonality condition (ii).
Let us say that a basis $(v_1,v_2,\ldots,v_n)$ is \emph{semiorthonormal} with respect
to a non-symmetric bilinear form $\langle - ,- \rangle$ if
(i)~$\langle v_i, v_i\rangle = 1$ for all $i=1,\ldots, n$ and
(ii) $\langle v_i, v_j\rangle = 0$ whenever $i > j$. One obviously needs
an adjustment to the formula~\eqref{eq:orthonormal}. Indeed,
in~\eqref{eq:orthonormal} we explicitly used the fact that under
the isomorphism $V^*\xrightarrow{\sim} V$ given by the form
$\langle - ,- \rangle$ the dual basis $(v^1,\ldots, v^n)$ maps
back to $(v_1,\ldots, v_n)$. Since we actually have two isomorphisms
this time, $V\to V^*$, $v\mapsto \langle v ,- \rangle$ and
$V\to V^*$, $v\mapsto \langle - ,v \rangle$, let us consider the second
one and denote by $u_i\in V$ the images of the dual basis its inverse.
That is, we have
\begin{equation*} 
    \langle v_i, u_j\rangle = \delta_{ij}\quad \text{for all } 1\leq i,j\leq n.
\end{equation*}
The vectors $u_1,\ldots, u_n$ obviously form a basis, and for any
$v \in V$ we have the desired formula
\begin{equation}\label{eq:vect-decomp}
    v = \langle v, u_1\rangle v_1 + \langle v, u_2\rangle v_2 + \cdots
+ \langle v, u_n\rangle v_n.
\end{equation}
What is less immediate, the vectors $(u_n, u_{n-1}, \ldots, u_1)$ (remark the reverse
order) form a semiorthonormal basis called the \emph{left dual} to
$(v_1,v_2,\ldots,v_n)$.

One rather indirect way to see that the latter holds is via \emph{mutations}.
Consider a semiorthonormal pair $(u, v)$: that is,
$\langle u, u \rangle=\langle v, v\rangle = 1$ and $\langle v, u \rangle=0$.
Let us define a new vector $\mathbb{L}_uv=v-\langle u, v\rangle u$, which
is called the \emph{left mutation} of $v$ through $u$. A simple calculation
shows that $(\mathbb{L}_uv, u)$ is a semiorthonormal pair. As the name suggests,
there is a sibling to the left mutation procedure: if one puts
$\mathbb{R}_vu = u-\langle u, v\rangle v$, which is called
the \emph{right mutation} of $u$ through $v$, then the pair
$(v, \mathbb{R}_vu)$ is semiorthonormal. It is a very nice exercise
in linear algebra to check that left and right mutations of adjacent
elements define an action of the braid group on $n$ strands on the set of all
semiorthonormal bases. Moreover, the left dual basis to a semiorthonormal
basis $(v_1,v_2,\ldots,v_n)$ is given by
\begin{equation}\label{eq:vect-mutation}
    \mathbb{L}_{v_1}\mathbb{L}_{v_2}\cdots\mathbb{L}_{v_{n-1}}v_n,\ 
    \mathbb{L}_{v_1}\mathbb{L}_{v_2}\cdots\mathbb{L}_{v_{n-2}}v_{n-1},\ 
    \ldots,\ 
    \mathbb{L}_{v_1}v_2,\ 
    v_1.
\end{equation}
We refer the reader to~\cite{Bondal2004} for further insights and
some interesting properties of this action.

The linear-algebraic picture translates to triangulated categories in the following
way. Instead of a~vector space we consider a $\kk$-linear triangulated category
$\mathcal{T}$ such that $\Ext^\bullet(E, F)$ is finite-dimensional
for all $E, F\in \mathcal{T}$ and treat $\Ext^\bullet_{\mathcal{T}}(-, -)$ as a kind of bilinear form.
Then one says that an object $E\in \mathcal{T}$ is \emph{exceptional}
if $\Hom(E, E)=\kk$ and $\Ext^i(E, E)=0$ for all $i\neq 0$.
A collection of objects $(E_1,E_2,\ldots, E_n)$ is called \emph{exceptional}
if every $E_i$ is an exceptional object and $\Ext^\bullet(E_i, E_j)=0$ for
all $i>j$. Finally, a~collection is called \emph{full} if it generates
the category in the sense that no proper strictly
full triangulated subcategory of $\mathcal{T}$ contains all $E_i$.
If $(E_1,E_2,\ldots, E_n)$ is a full exceptional collection in $\mathcal{T}$,
we will write $\mathcal{T}=\langle E_1,E_2,\ldots, E_n\rangle$.

The analogy with semiorthonormal bases should be clear by now.
Assume that $\mathcal{T}$ has a full exceptional collection
$(E_1,E_2,\ldots, E_n)$ (which is rarely the case).
What is most interesting is that not only every object in $\mathcal{T}$
can be obtained from the finite set of $E_i$'s by iteratively taking
shifts and cones, but this procedure can be made rather explicit.
Recall that the decomposition of every vector in terms of a given
semiorthonormal basis could be done with the help of a left
dual semiorthonormal basis by~\eqref{eq:vect-decomp}.
Let us mimic its definition in the categorical case.
\begin{definition}[\cite{Bondal1989a}]
    A collection of objects $(E_n^\vee,E_{n-1}^\vee,\ldots, E_1^\vee)$
    is \emph{left dual} to $(E_1,E_2,\ldots, E_n)$ if
    \begin{equation*}
        \Ext^\bullet(E_i, E_j^\vee)=0 \text{ for } i\neq j
        \quad\text{and}\quad
        \Ext^\bullet(E_i, E_i^\vee)=\kk \text{ for all } i=1,\ldots, n.
    \end{equation*}
\end{definition}
It should not surprise the reader at this point that
$(E_n^\vee,E_{n-1}^\vee,\ldots, E_1^\vee)$ is again
an exceptional collection, and it is full whenever the original one is.
As in the linear algebraic case, there are operations of left and right mutation
given for an exceptional pair $(E, F)$ by exact triangles
\begin{equation*}
    \mathbb{L}_EF\to \Ext^\bullet(E, F)\otimes E\xrightarrow{\mathrm{ev}} F\to \mathbb{L}_EF[1],
    \quad
    \mathbb{R}_FE[-1]\to E\xrightarrow{\mathrm{coev}} \Ext^\bullet(E, F)^*\otimes F\to \mathbb{R}_FE.
\end{equation*}
These define an action of the braid group on $n$ strands on the set
of all exceptional collections of length~$n$ in~$\mathcal{T}$,
and a formula similar to~\eqref{eq:vect-mutation} determines the left
dual collection:
\begin{equation*}
    E_i^\vee = \mathbb{L}_{E_1}\mathbb{L}_{E_2}\cdots\mathbb{L}_{E_{i-1}}E_i.
\end{equation*}
Finally, instead of the decomposition~\eqref{eq:vect-decomp}
one has a spectral sequence which computes cohomological functors applied
to objects in $\mathcal{T}$, which we will talk about
in Section~\ref{ssec:dual}.

The bridge between exceptional collections and semiorthonormal bases
is actually rather simple. Given $\mathcal{T}$ as above
with a full exceptional collection
$(E_1,E_2,\ldots, E_n)$, one checks that the classes of $E_i$
form a basis in the Grothendieck group $K_0(\mathcal{T})$. In particular,
the length of any full exceptional collection equals the rank of the latter,
which should a posteriori be a free finitely generated abelian group.
If one takes $\sum_i (-1)^i \dim_\kk \Ext^i(-, -)$ as the bilinear form,
exceptional collections become ``categorifications'' of the corresponding
semiorthonormal bases.

The first example of a full exceptional collection was given in~\cite{Beilinson1978},
in which Beilinson showed that
\begin{equation*}
    D^b(\PP^n)=\langle \mathcal{O}, \mathcal{O}(1),\ldots, \mathcal{O}(n)\rangle,
\end{equation*}
where $D^b(-)$ stands for the bounded derived category of coherent sheaves.
Interestingly enough, he also showed that the left dual (up to shifts)
is given by the collection
$\langle \Omega^n(n), \Omega^{n-1}(n-1),\ldots,\Omega^1(1),\mathcal{O} \rangle$.
A long-standing conjecture states that the bounded derived category of
coherent sheaves on a rational homogeneous variety admits a full exceptional
collection. Though a lot of work has been done over the years, the conjecture
has been
established in a very limited number of cases. Say, for classical groups
of Dynkin type $ABCD$ and homogeneous varieties of Picard rank $1$
the problem was fully resolved only for Grassmannians~\cite{Kapranov1984},
quadrics~\cite{Kapranov1988}, symplectic and orthogonal Grassmannians
of planes~\cite{Kuznetsov2008, Kuznetsov2021},
Lagrangian Grassmannians~\cite{Fonarev2022}, and in some sporadic cases.
We refer the reader to~\cite{Kuznetsov2016} for further details.

Since many explicit constructions realize varieties as subvarieties
in Grassmannians, exceptional collections on the latter
become an important computational
tool. One of our favorite recent examples can be found in~\cite{Lee2021},
where the authors use exceptional collections to study moduli of Ulrich
bundles. In order to use the tool's maximum power, it is important
to know the dual collection, and finding one is a task of its own.
In the present paper we find exceptional collections on Lagrangian Grassmannians
dual to those constructed in~\cite{Fonarev2022}. We further use them
to provide explicit resolutions of some very natural irreducible vector
bundles.

From now on we will be interested in the bounded derived category
of coherent sheaves on $\LGr(n, V)$, the Lagrangian Grassmannian of isotropic
subspaces of dimension $n$ in a fixed $2n$-dimensional vector space $V$
over a field $\kk$ of characteristic $0$ equipped with a non-degenerate
symplectic form. Exceptional collections of maximal length
(equal to the rank of the Grothendieck group) were constructed
on all symplectic and orthogonal Grassmannians by Kuznetsov and Polishchuk
in~\cite{Kuznetsov2016}. All these collections are conjecturally full;
however, the latter was checked only for Lagrangian Grassmannians in~\cite{Fonarev2022}.
The construction of Kuznetsov and Polishchuk is rather indirect: they start
with certain collections of irreducible vector bundles,
called \emph{blocks},
which naturally form an exceptional collection in the equivariant derived
category, then pass to dual collections within each block (again, in
the equivariant derived category). Each block must satisfy certain
homological conditions which guarantee that the dual collections become
exceptional in the non-equivariant derived category. Most of the hard work
in~\cite{Kuznetsov2016} is related to checking that certain collections
of irreducible equivariant vector bundles satisfy the block condition,
which is a rather difficult problem in representation theory.

While the block condition will be discussed in Section~\ref{ssec:fec-lgr},
let us explain why isotropic Grassmannians are much harder than the classical
ones. Full exceptional collections in the bounded derived categories
of Grassmannians were constructed by Kapranov~\cite{Kapranov1988},
who naturally extended Beilinson's method. Consider the Grassmannian
$\Gr(k, V)$ of $k$-dimensional subspaces in a fixed $N$-dimensional
vector space $V$ over a field $\kk$ of characteristic zero. Denote by
$\CU$ the tautological rank $k$ subbundle of the trivial bundle $V\otimes \CO$.
Kapranov showed that
\begin{equation}\label{eq:gr-fec}
    D^b(\Gr(k, V)) = \left\langle\Sigma^\lambda \CU^* \mid \lambda\in \YD_{k,N-k} \right\rangle,
\end{equation}
where $\lambda$ runs over the set of Young diagrams $\YD_{k, N-k}$
of height at most $k$ and width at most $N-k$,
$\Sigma^\lambda$ is the corresponding Schur functor, and the order
in this collection can be taken to be any linear order refining
the partial inclusion order $\subseteq$ on the diagrams. Moreover,
he simultaneously constructed the graded\footnote{Graded in the sense defined below.}
left dual to this collection. The latter
is given by
\begin{equation}\label{eq:gr-fec-dual}
    D^b(\Gr(k, V)) = \left\langle\Sigma^{\lambda^T} \CU^\perp \mid \lambda\in \YD_{k,N-k} \right\rangle,
\end{equation}
where $\CU^\perp = (V/\CU)^*$ and $\lambda^T$ denotes the transposed
diagram.
\begin{remark}
    The reader might wonder why we use this extra transposition
    and not index the collection by $\YD_{N-k, k}$, as Kapranov does.
    After all, $\{\lambda^T \mid \lambda \in \YD_{k, N-k}\} = \YD_{N-k, k}$.
    The reason for this choice becomes clear in Section~\ref{ssec:dual}, where we introduce
    our grading convention for dual exceptional collections.
\end{remark}

The duality relation might seem a little different from the one we described earlier:
\begin{equation}\label{eq:gr-dual}
    \Ext^\bullet(\Sigma^\lambda \CU^*, \Sigma^{\mu^T} \CU^\perp) =
    \begin{cases}
        \kk[-|\lambda|], & \text{if } \lambda=\mu, \\
        0, & \text{otherwise},
    \end{cases}
\end{equation}
but this grading difference does not change much since the two definitions
are equivalent up to shifts in the derived category. Actually, the grading
choice in~\eqref{eq:gr-dual} is favorable since the dual collection then
consists of vector bundles. Below, in Lemma~\ref{lm:gr-dual}, we give
a formal definition of a graded
dual exceptional collection, and this is an example of such a collection.

Return to the case when $V$ is symplectic of dimension $2n$.
Since $\LGr(n, V)$ is naturally embedded as a~closed subvariety in $\Gr(n, V)$,
and the tautological bundle on the latter restricts to the tautological bundle
on the former,
one can ask whether any of elements of Kapranov's collection restrict
to exceptional vector bundles on $\LGr(n, V)$. This is where surprising things
happen. It turns out that up to twist the only Young diagrams for which $\Sigma^\lambda \CU^*$
are exceptional on $\LGr(n, V)$ are $\lambda \in \YD_{n,1}$. The number of objects
in any full exceptional collection on $\LGr(n, V)$ equals
$\rk K_0(\LGr(n, V)) = 2^n$, and it is easy to see that one can not form
a sufficiently large exceptional collection using the latter bundles.
Remark also that $\CU^\perp$ restricts
to $\CU$ on $\LGr(n, V)$.

The construction of Kuznetsov and Polishchuk produces for any
$\lambda\in \YD_{h, n-h}$, where $0\leq h\leq n$, an~equivariant non-irreducible (in general)
vector bundle
$\CE^\lambda$ on $\LGr(n, V)$ with the following properties. First,
$\CE^\lambda$ belongs to the subcategory of the derived category $D^b(\LGr(n, V))$
generated by $\Sigma^\mu \CU^*$ for $\mu\subseteq \lambda$.
Second, the $\bfG=\SP_{2n}$-equivariant groups
$\Ext^\bullet_\bfG(\CE^\lambda, \Sigma^\mu \CU^*) = 0$ for all $\mu\subsetneq \lambda$,
while $\Ext^\bullet_\bfG(\CE^\lambda, \Sigma^\lambda \CU^*) = \kk$.
A given diagram may belong to various sets $\YD_{h, n-h}$, but the resulting
bundle does not depend on the choice of~$h$ since the previous conditions
fully characterize it and are independent of $h$. The first nontrivial example of such a bundle
is the universal extension
\begin{equation*}
    0\to \CO \to \CE^2 \to S^2\CU^* \to 0.
\end{equation*}

Kuznetsov and Polishchuk showed that for any $0\leq h \leq n$ the bundles
\begin{equation}\label{eq:lgr-block}
    \left\langle\CE^\lambda \mid \lambda\in \YD_{h,n-h} \right\rangle
\end{equation}
form an exceptional collection in $D^b(\LGr(n, V))$. As in the case of Grassmannians, one can
linearly order them in any way compatible with the partial inclusion
order on the corresponding Young diagrams. Let us denote by $\CF^\lambda$
the bundle dual (in the usual sense) to $\CE^\lambda$:
\begin{equation*}
    \CF^\lambda = (\CE^\lambda)^*.
\end{equation*}
Our first main result is the following theorem.

\begin{theo}[Theorem~\ref{thm:dual}]\label{thm:intro-dual}
    The bundles $\left\langle\CF^{\lambda^T} \mid \lambda\in \YD_{h,n-h} \right\rangle$
    form a graded left dual exceptional collection to~\eqref{eq:lgr-block}.
\end{theo}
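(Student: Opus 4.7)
My plan is to verify directly the graded left duality relation
\begin{equation*}
    \Ext^k(\CE^\lambda, \CF^{\mu^T}) =
    \begin{cases} \kk, & \text{if } \lambda = \mu \text{ and } k = |\lambda|, \\ 0, & \text{otherwise,} \end{cases}
\end{equation*}
for all $\lambda, \mu \in \YD_{h, n-h}$. Both collections have the same length $|\YD_{h, n-h}|$ and generate the same subcategory of $D^b(\LGr(n, V))$, so this vanishing will characterize $(\CF^{\lambda^T})$ as the graded left dual.

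First, I would convert the Ext group into sheaf cohomology: since $\CE^{\mu^T}$ is locally free and $\CF^{\mu^T} = (\CE^{\mu^T})^*$,
\begin{equation*}
    \Ext^\bullet(\CE^\lambda, \CF^{\mu^T}) = H^\bullet(\LGr(n, V), \CF^\lambda \otimes \CF^{\mu^T}).
\end{equation*}
By dualizing the Kuznetsov-Polishchuk defining properties of $\CE^\lambda$, one obtains that $\CF^\lambda$ sits in $\langle \Sigma^\nu \CU \mid \nu \subseteq \lambda\rangle$ and satisfies $\Ext^\bullet_\bfG(\Sigma^\nu \CU, \CF^\lambda) = \delta_{\lambda\nu}\, \kk$. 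I would then resolve both $\CF^\lambda$ and $\CF^{\mu^T}$ by successive extensions whose quotients are of the form $\Sigma^\nu \CU$ (respectively $\Sigma^\rho \CU$) with $\nu \subseteq \lambda$ (respectively $\rho \subseteq \mu^T$). This yields a double spectral sequence whose $E_1$-page is built from
\begin{equation*}
    H^\bullet(\LGr(n, V), \Sigma^\nu \CU \otimes \Sigma^\rho \CU), \qquad \nu \subseteq \lambda,\ \rho \subseteq \mu^T,
\end{equation*}
each in principle computable by combining Littlewood-Richardson decomposition with Borel-Weil-Bott in type $C_n$.

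The crux of the argument is then to show that all contributions to this spectral sequence cancel on $E_\infty$ unless $\lambda = \mu$, and that the single surviving term when $\lambda = \mu$ is $\kk$ in cohomological degree $|\lambda|$. The equivariant Ext constraints $\Ext^\bullet_\bfG(\CE^\lambda, \Sigma^\nu \CU^*) = \delta_{\lambda\nu}\, \kk$ (and their duals) should force these cancellations already at the $E_1$-page and simultaneously pin down the surviving degree.

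The main obstacle will be the Borel-Weil-Bott input: I would need a type-$C$ complementarity vanishing lemma of the form ``$H^\bullet(\LGr(n, V), \Sigma^\nu \CU \otimes \Sigma^\rho \CU) = 0$ outside a tightly controlled range of $(\nu, \rho)$'', together with a precise identification of the surviving cohomological degree and weight when the bound is attained. In contrast to the type-$A$ Grassmannian case treated by Kapranov, the $\rho$-shifted Weyl action in type $C_n$ makes it substantially more delicate to determine which diagrams avoid the walls of the dominant chamber, and this is where the representation-theoretic heavy lifting must take place.
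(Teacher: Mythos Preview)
Your proposal has a genuine gap at exactly the point you flag as the ``main obstacle.'' The equivariant constraints $\Ext^\bullet_\bfG(\Sigma^\nu\CU, \CF^\lambda) = \delta_{\lambda\nu}\,\kk$ control the \emph{multiplicities} with which the irreducible pieces appear in your resolution, but they say nothing about the non-equivariant groups $H^\bullet(\LGr(n,V), \Sigma^\nu\CU \otimes \Sigma^\rho\CU)$ that populate the $E_1$-page, nor about the differentials between them. There is no mechanism by which the equivariant data alone ``force these cancellations already at the $E_1$-page''; the block condition of Kuznetsov--Polishchuk decomposes $\Ext^\bullet(\Sigma^\lambda\CU^*, \Sigma^\mu\CU^*)$ for $\lambda,\mu$ in the \emph{same} block, whereas you need mixed pairs with $\nu\in\YD_{h,n-h}$ and $\rho\in\YD_{n-h,h}$. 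Carrying out the type-$C$ Borel--Weil--Bott analysis for all such pairs and then tracking the spectral sequence is a computation of the same order of difficulty as establishing the block condition itself, and you have not indicated how to do it. You also assert without justification that the two collections generate the same subcategory; this is not automatic and is the content of Lemma~\ref{lm:gen}, which in turn relies on \cite[Lemma~3.7]{Fonarev2022}.

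The paper sidesteps all of this via a categorical transfer. It uses the geometric realization $\CF^\mu \simeq p_*q^*\CG^\mu$ of Proposition~\ref{prop:flgl}, where $\CG^\mu$ is the graded left dual to the Kapranov-type collection $\langle\Sigma^\lambda\CW^*\rangle$ on $\IGr(h,V)$. An adjunction argument (Lemma~\ref{lm:adj}) then shows that the graded right dual $\CTE^\nu$ to $\langle\CF^\mu\rangle$ satisfies $\Ext^\bullet(\CTE^\nu,\Sigma^\mu\CU^*) \simeq \Ext^\bullet_{\IGr(h,V)}(\Sigma^\nu\CW^*,\Sigma^\mu\CW^*)$, reducing the question to the already-known exceptionality on $\IGr(h,V)$. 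A short cone argument, using that $\CE^\nu$ satisfies the identical $\Ext$-conditions against $\Sigma^\mu\CU^*$ (this comes from the block property, \cite[Corollary~3.8]{Kuznetsov2016}), then identifies $\CTE^\nu$ with $\CE^\nu$. No direct type-$C$ Borel--Weil--Bott computation for tensor products is required.
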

We formulated Theorem~\ref{thm:intro-dual} in terms of transposed diagrams in order to show
the parallel between our Lagrangian situation and the case of classical
Grassmannians: compare the latter theorem with~\eqref{eq:gr-fec}
and~\eqref{eq:gr-fec-dual} keeping in mind that $\CU^\perp$ is isomorphic to
$\CU=(\CU^*)^*$ on $\LGr(n, V)$.

It is easy to see that the bundle $\Sigma^\lambda\CU^*$ belongs to
the subcategory~\eqref{eq:lgr-block} whenever $\lambda\in \YD_{h,n-h}$.
A nice geometric description (as a matter of fact, two of them)
was given for the bundles $\CF^\mu$ in~\cite{Fonarev2022}. Using this
description, one can compute the graded dual spectral sequences
and get the following result as an~application of Theorem~\ref{thm:intro-dual}
(see Section~\ref{ssec:res} for the details).

\begin{theo}[Theorem~\ref{thm:resol}]\label{thm:intro-resol}
    Let $\lambda\in \YD_{h,n-h}$ for some $0\leq h\leq n$. There
    is an exact $\SP(V)$-equivariant sequence of vector bundles on $\LGr(n, V)$ of the form
    \begin{equation*}
        0 \to \bigoplus_{\mu\in \mathrm{B}_{h(h+1)}}\CE^{\lambda/\mu}\to
        \cdots
        \to \bigoplus_{\mu\in \mathrm{B}_{2t}}\CE^{\lambda/\mu}
        \to \cdots\to
        \bigoplus_{\mu\in \mathrm{B}_4}\CE^{\lambda/\mu} \to
        \bigoplus_{\mu\in \mathrm{B}_2}\CE^{\lambda/\mu} \to
        \CE^\lambda\to \Sigma^\lambda\CU^*\to 0,
    \end{equation*}
    where $\mathrm{B}_{2t}$ denotes the set of balanced diagrams with $2t$ boxes,
    and $\CE^{\lambda/\mu}$ is a direct sum of certain bundles of the~form
    $\CE^{\nu}$, see Section~\ref{ssec:skew}.
\end{theo}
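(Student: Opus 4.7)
The plan is to invoke the graded dual resolution machinery guaranteed by Theorem~\ref{thm:intro-dual} and apply it to $X=\Sigma^\lambda\CU^*$. Since every $\Sigma^\mu\CU^*$ with $\mu\subseteq\lambda$ belongs to the block subcategory $\langle\CE^\nu\mid\nu\in\YD_{h,n-h}\rangle$, so does $\Sigma^\lambda\CU^*$. The general dual spectral sequence recalled in Section~\ref{ssec:dual} then converges to $\Sigma^\lambda\CU^*$ with $E_1$-page of the form
$$E_1^{p,q}=\bigoplus_{\mu\in\YD_{h,n-h}}\Ext^{q}(\CF^{\mu^T},\Sigma^\lambda\CU^*)\otimes\CE^\mu,$$
with differentials induced by the mutations of the exceptional collection. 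Provided that, for each $\mu$, the relevant $\Ext^{\bullet}$ is concentrated in a single cohomological degree, this spectral sequence collapses at $E_1$ into a finite complex resolving $\Sigma^\lambda\CU^*$ by direct sums of $\CE^\mu$'s.

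The bulk of the argument is therefore the computation of $\Ext^\bullet(\CF^{\mu^T},\Sigma^\lambda\CU^*)$. For this, I would use the explicit geometric realization of $\CF^{\mu^T}$ given in~\cite{Fonarev2022}, namely as the derived pushforward from an auxiliary isotropic flag variety $\pi\colon\IFl\to\LGr(n,V)$ of an explicit $\SP(V)$-equivariant vector bundle. The projection formula and base change reduce the problem to cohomology on $\IFl$ of a homogeneous bundle, which by Borel--Weil--Bott is controlled by the shifted Weyl-group action on a distinguished weight attached to the pair $(\lambda,\mu)$. The expected outcome -- which is precisely what the theorem encodes -- is that (i) the groups vanish unless $\mu\subseteq\lambda$ and $\mu$ is \emph{balanced}; (ii) in that case they are concentrated in cohomological degree $-|\mu|=-2t$; and (iii) the resulting $\SP(V)$-multiplicity space, once tensored with $\CE^\mu$, recombines precisely into the skew bundle $\CE^{\lambda/\mu}$ introduced in Section~\ref{ssec:skew}.

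Once these $\Ext$-computations are in hand, the first paragraph immediately delivers the claimed complex and its differentials, and exactness follows from convergence of the spectral sequence to $\Sigma^\lambda\CU^*$. The main obstacle is thus item (iii) of the $\Ext$-computation: while the vanishing of most $\Ext$'s is morally forced by the exceptionality of~\eqref{eq:lgr-block}, pinpointing the surviving contributions as exactly the balanced sub-diagrams -- as opposed to all sub-diagrams, which would be the answer in type~$A$, cf.~\eqref{eq:gr-dual} -- and then matching the Weyl-twisted weights produced by Borel--Weil--Bott with the Schur--Weyl decomposition of $\CE^{\lambda/\mu}$ is where the distinctively symplectic combinatorics enters and where most of the representation-theoretic bookkeeping, in the spirit of~\cite{Kuznetsov2016}, is expected to be needed.
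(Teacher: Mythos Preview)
Your overall strategy---decompose $\Sigma^\lambda\CU^*$ via the graded dual spectral sequence and reduce to an $\Ext$-computation using the geometric description of $\CF^\mu$---is exactly what the paper does. However, the proposed execution of the $\Ext$-computation has a genuine gap, and there is a related confusion about which diagram in the answer is the balanced one.

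The geometric description used in the paper is $\CF^\mu\simeq p_*q^*\CG^\mu$ from Proposition~\ref{prop:flgl}, where $\CG^\mu$ is the \emph{non-equivariant} graded left dual to $\langle\Sigma^{\kappa^T}\CW^*\rangle$ on $\IGr(h,V)$. This $\CG^\mu$ is \emph{not} an irreducible homogeneous bundle, so your plan to ``reduce to cohomology on $\IFl$ of a homogeneous bundle'' and invoke Borel--Weil--Bott for ``a distinguished weight attached to the pair $(\lambda,\mu)$'' does not go through as stated. The paper instead factors the computation through $\IGr(h,V)$: by adjunction and projection formula one is left with $H^\bullet(\IGr(h,V),\,q_*(\Sigma^\lambda\CU)\otimes\CG^\mu)$. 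The pushforward $Rq_*(\Sigma^\lambda\CU)$ is computed via the filtration coming from $0\to\CW\to\CU\to\CU/\CW\to 0$ together with relative Lagrangian Borel--Bott--Weil (Corollary~\ref{cor:van}); \emph{this} is where balanced diagrams appear, yielding $R^jq_*(\Sigma^\lambda\CU)\simeq\bigoplus_{\nu\in\mathrm{B}_{2j}}\Sigma^{\lambda/\nu}\CW$. The remaining step uses no Borel--Bott--Weil at all: one expands $\Sigma^{\lambda/\nu}\CW$ via Littlewood--Richardson and applies the defining orthogonality~\eqref{eq:gl} of $\CG^\mu$ against $\Sigma^\kappa\CW^*$. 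This second step is what produces the LR multiplicities and hence the skew organization.

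This also explains why your items (i)--(iii) are slightly miscast. In the spectral sequence~\eqref{eq:ss-el} the objects $\CE^\alpha$ are indexed by $\alpha\in\YD_{h,n-h}$, and $\Ext^\bullet(\Sigma^\lambda\CU^*,\CF^{\alpha^T})$ is nonzero not when $\alpha$ is balanced, but when $\alpha\subseteq\lambda/\nu$ for some balanced $\nu$; the multiplicity is $c(\nu,\alpha;\lambda)$. The regrouping into $\bigoplus_{\nu\in\mathrm{B}_{2t}}\CE^{\lambda/\nu}$ is a repackaging of the $E_1$-page after the fact, not a statement about which $\CE^\alpha$ survive. (A minor point: the spectral sequence involves $\Ext^{-q}(\Sigma^\lambda\CU^*,\CF^{\mu^T})^*$, not $\Ext^q(\CF^{\mu^T},\Sigma^\lambda\CU^*)$; these are not the same in the non-equivariant category.)
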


The paper is organized as follows. In Section~2 we collect all the preliminaries.
It contains no new material except, maybe, our convention for dual exceptional
collections for exceptional collections indexed by a graded partially ordered
set (see Lemma~\ref{lm:gr-dual}). Section~3 contains our main results.
That is, we prove Theorems~\ref{thm:intro-dual} and~\ref{thm:intro-resol},
which are Theorems~\ref{thm:dual} and~\ref{thm:resol}, respectively.

\section{Preliminaries}
Throughout the paper we work over a field $\kk$ of characteristic zero.

\subsection{Dual exceptional collections}\label{ssec:dual}
In the present section we collect some preliminaries related to (dual)
exceptional collections. The material is well known to specialists;
however, since we naturally work with exceptional collections indexed
by graded partially ordered sets, we introduce a certain convention
in the definitions.
This convention seems rather natural and useful, as we show in various examples.

\subsubsection{Partially ordered sets}
Recall that a \emph{partially ordered set (poset)} $\CP$ is a set equipped with
a binary relation $\preceq$, called a \emph{partial order,} satisfying
the following three properties:
\begin{description}
    \item[Reflexivity] $x\preceq x$ for all $x\in\CP$.
    \item[Antisymmetry] if $x\preceq y$ and $y\preceq x$, then $x=y$.
    \item[Transitivity] is $x\preceq y$ and $y\preceq z$, then $x\preceq z$ for all $x,y,z\in\CP$.
\end{description}
Elements $x$ and $y$ are called \emph{comparable} if either $x\preceq y$ or $y\preceq x$.
If every two elements in $\CP$ are comparable, one calls $\CP$
\emph{linearly ordered}. If $x\preceq y$ and $x\neq y$, one usually writes $x \prec y$.
If $\CP$ is partially ordered, its \emph{dual} $\CP^\circ$ is the set underlying
$\CP$ equipped with the converse relation: $x\preceq y$ in $\CP^\circ$ if and only
if $y\preceq x$ in $\CP$.

Let $x$ and $y$ be elements of a poset $\CP$. One says that $y$ \emph{covers} $x$,
written $x\lessdot y$, if $x\prec y$ and there is no element $z$ such that $x\prec z\prec y$.
A \emph{grading function} on $\CP$ is a map $\rho: \CP\to \ZZ$ with the following
properties:
\begin{itemize}
    \item if $x\prec y$ then $\rho(x) < \rho(y)$,
    \item if $x\lessdot y$ then $\rho(y)=\rho(x)+1$.
\end{itemize}
A poset equipped with a grading function is called a \emph{graded poset}.
Of course, not all posets can be turned into graded ones. We will be mainly
interested in finite posets. If $\CP$ is finite and admits a grading function,
there is a rather natural choice for such a function: there exists a unique
grading function $|-|$ with the property that the smallest value it takes
on each connected component is $0$.\footnote{If the poset
has the smallest element $x$, then $|x|=0$. In general, the grading function will be $0$
for at least one minimal element in each connected component,
but not necessarily all of them.}
In the following by a graded poset we mean a finite poset equipped with this
natural grading function.
Moreover, all the posets of interest will contain a smallest element.

\begin{example}\label{ex:yhw}
    Let $\YD_{h,w}$ denote the set of Young diagrams of height at most $h$
    and width at most $w$. This set can be identified with the set of integer
    sequences $(\lambda_1,\lambda_2,\ldots,\lambda_h)$ such that
    $w\geq\lambda_1\geq\lambda_2\geq\cdots\geq\lambda_h\geq 0$.
    There is a natural partial order on $\YD_{h,w}$ given by inclusion of diagrams:
    $\lambda\subseteq \mu$ if $\lambda_i\leq \mu_i$ for all $i=1,\ldots, h$.
    With this partial order the poset $\YD_{h,w}$ is graded, and
    $|\lambda|=\lambda_1+\lambda_2+\cdots+\lambda_h$ equals the number of boxes
    in the diagram $\lambda$.
\end{example}

\subsubsection{Exceptional collections}
Througout the paper we will only work with triangulated categories which
are $\kk$-linear. Let $\CT$ be such a category.
Recall that an object $E\in\CT$ is called
\emph{exceptional} if $\Hom(E, E)=\kk$ and $\Ext^t(E, E)=\Hom(E, E[t])=0$ for
all $t\neq 0$. Let $\CP$ be a poset. An \emph{exceptional collection}
indexed by $\CP$ is a collection of exceptional objects $\{E_x\}_{x\in\CP}$
such that $\Ext^\bullet(E_x, E_y)=0$ unless $x\preceq y$. We denote by
$\langle E_x \mid x\in \CP \rangle$ the smallest strictly full triangulated subcategory
in $\CT$ containing all $E_x$. If $\CP$ is finite, one can always refine the order
so that it becomes isomorphic to the poset $\{1, 2, \ldots, l\}$. Under this
isomorphism we get the usual definition of an exceptional collection: that is,
a~collection of exceptional objects $(E_1,E_2,\ldots, E_l)$ such that
$\Ext^\bullet(E_j, E_i)=0$ for all $l\geq j>i\geq 1$.

We allow ourselves to go over some of the examples mentioned in the introduction
in order to illustrate our grading conventions.

\begin{example}\label{ex:gr-ex}
    Let $V$ be an $n$-dimensional vector space over $\kk$.
    Consider the Grassmannian $\Gr(k, V)$, and let $\CU$ denote the tautological
    rank $k$ subbundle in the trivial bundle $V\otimes \CO$. Kapranov showed in~\cite{Kapranov1984}
    that the bounded derived category of coherent sheaves $D^b(\Gr(k, V))$
    admits a full exceptional collection indexed by~$\YD_{k, n-k}$, the poset introduced
    in Example~\ref{ex:yhw}:
    \begin{equation*}
        D^b(\Gr(k, V)) = \left\langle \Sigma^\lambda \CU^* \mid \lambda \in \YD_{k,n-k}\right\rangle,
    \end{equation*}
    where $\Sigma^\lambda$ denotes the Schur functor associated
    with $\lambda$.\footnote{Our convention for Schur functors is such
    that $\Sigma^{(p)}$ is isomorphic to the $p$-th symmetric power $S^p$,
    so $\Sigma^{(1, 1)}\simeq \Lambda^2$.} The fact that this collection
    is indexed by a~poset gives more information about orthogonality of
    different objects: if $\lambda$ and $\mu$ are incomparable when considered
    as Young diagrams (that~is,
    neither is contained in the other), then both
    $\Ext^\bullet(\Sigma^\lambda \CU^*, \Sigma^\mu \CU^*)=0$ and
    $\Ext^\bullet(\Sigma^\mu \CU^*, \Sigma^\lambda \CU^*)=0$.
\end{example}

\subsubsection{Mutations and duality}
Let $(E, F)$ be an exceptional pair in $\CT$. The \emph{left mutation} $\mathbb{L}_EF$
of $F$ through $E$ is defined by the distinguished triangle
\begin{equation*}
    \mathbb{L}_EF\to \Ext^\bullet(E, F)\otimes E\xrightarrow{\mathrm{ev}} F\to \mathbb{L}_EF[1],
\end{equation*}
where $\mathrm{ev}$ is the evaluation morphism. Similarly, define the \emph{right mutation}
$\mathbb{R}_FE$ via the distinguished triangle
\begin{equation*}
    \mathbb{R}_FE[-1]\to E\xrightarrow{\mathrm{coev}} \Ext^\bullet(E, F)^*\otimes F\to \mathbb{R}_FE,
\end{equation*}
where $\mathrm{coev}$ is the coevaluation morphism. One easily checks that both $(\mathbb{L}_EF, E)$
and $(F, \mathbb{R}_FE)$ are exceptional pairs. Moreover,
$\langle E, F\rangle=\langle \mathbb{L}_EF, E\rangle=\langle F, \mathbb{R}_FE\rangle$.\footnote{A careful
reader might point out that this definition of mutations does not fully agree with the one given
in the introduction for semiorthonormal bases. Namely, they differ by a sign on the Grothendieck group.
We prefer this choice though, since it is more common in the literature.}

\begin{example}
    Consider the Grassmannian $\Gr(k, V)$. Then the structure sheaf and the dual tautological bundle
    form an exceptional pair $(\CO, \CU^*)$. One quickly checks that $\mathbb{L}_{\CO}\CU^*\simeq \CU^\perp$,
    where $\CU^\perp=(V/\CU)^*$.
\end{example}

Mutations of adjacent elements define
an action of the braid group $\mathrm{Br}_l$ on $l$ strands on the set of exceptional collections
in $\langle E_1,E_2,\ldots,E_l\rangle$, see~\cite{Bondal1989}.
For a fixed collection there are two important elements
in the orbit. Namely, the dual collections. The \emph{left dual} collection to $(E_1,E_2,\ldots,E_l)$
is defined as
\begin{equation*}
    (\mathbb{L}_{E_1}\mathbb{L}_{E_2}\cdots\mathbb{L}_{E_{l-1}}E_l,\ 
    \mathbb{L}_{E_1}\mathbb{L}_{E_2}\cdots\mathbb{L}_{E_{l-2}}E_{l-1},\ \ldots,\ 
    \mathbb{L}_{E_1}E_2,\ E_1).
\end{equation*}
We denote it by $(E^\vee_l, E^\vee_{l-1}, \ldots, E^\vee_1)$. The left dual exceptional collection
can be fully characterized by the following three properties:
\begin{enumerate}
    \item $E^\vee_i\in \langle E_1,E_2,\ldots, E_l\rangle$ for all $1\leq i\leq l$,
    \item $\Ext^\bullet(E_i, E_j^\vee)=0$ for all $i\neq j$,
    \item $\Ext^\bullet(E_i, E_i^\vee)=\kk[-i+1]$ for all $1\leq i\leq l$.
\end{enumerate}

\begin{example}\label{ex:pn}
    Let us return to projective spaces.
    The bounded derived category of $\PP(V)$ has a full exceptional
    collection consisting of the line bundles $\langle \CO, \CO(1), \ldots, \CO(n)\rangle$,
    where $n$ is the~dimension of $V$. Its left dual is given by
    $\langle \Omega^n(n), \Omega^{n-1}(n-1), \ldots, \Omega^1(1), \CO\rangle$,
    where $\Omega^i=\Lambda^i\Omega^1_{\PP(V)}$.
\end{example}

Similarly, the \emph{right dual} collection is defined as
\begin{equation*}
    (E_l,\ \mathbb{R}_{E_l}E_{l-1},\ \mathbb{R}_{E_l}\mathbb{R}_{E_{l-1}}E_{l-2},\ \ldots,\ 
    \mathbb{R}_{E_l}\mathbb{R}_{E_{l-1}}\cdots\mathbb{R}_{E_2}E_1),
\end{equation*}
and will be denoted by $(\lvee{E}_l, \lvee{E}_{l-1}, \ldots, \lvee{E}_1)$. It can be fully
characterized by the following conditions:
\begin{enumerate}
    \item $\lvee{E}_i\in \langle E_1,E_2,\ldots, E_l\rangle$ for all $1\leq i\leq l$,
    \item $\Ext^\bullet(\lvee{E}_i, E_j)=0$ for all $i\neq j$,
    \item $\Ext^\bullet(\lvee{E}_i, E_i)=\kk[-l+i]$ for all $1\leq i\leq l$.
\end{enumerate}

Remark that in a given exceptional collection $(E_1,E_2,\ldots,E_l)$ one can
replace any object $E_i$ with its shift $E_i[t]$ for any integer $t$.
For instance, one can introduce an extra shift in the definitions of
the right and left mutations. The first two defining conditions for the
left and right dual collections will not change, while the third condition
will become slightly nicer:
\begin{equation*}
    \Ext^\bullet(\lvee{E}_i, E_i) = \Ext^\bullet(E_i, E_i^\vee) = \kk\quad \text{for any }1\leq i\leq l.
\end{equation*}
This convention is often reasonable, yet even in the case of the projective
space, Example~\ref{ex:pn}, the dual collection will not consist of vector
bundles while the original collection does.

Meanwhile, the definitions we have just given also have a downside.
Imagine that $(E, F)$ is a fully orthogonal pair in $\CT$. Then
$\mathbb{L}_EF\simeq F[-1]$, while $\mathbb{R}_FE\simeq E[1]$.
This is often inconvenient as well. We propose the following lemma-definition,
which is tailored to the case of an exceptional collection indexed by
a finite graded poset $\CP$. The reader will immediately check that
once the collection is linearly ordered, the left dual
differs from the graded left dual by shifts of objects.

\begin{lemma}\label{lm:gr-dual}
    Let $\langle E_x\mid x\in\CP \rangle$ be an exceptional collection
    indexed by a finite graded poset $\CP$. For any $y\in \CP$ there exists
    a unique (up to isomorphism) object $E^\circ_y\in \langle E_x\mid x\in\CP \rangle$ such that
    \begin{enumerate}
        \item $\Ext^\bullet(E_x, E^\circ_y)=0$ for all $x\neq y$,
        \item $\Ext^\bullet(E_y, E^\circ_y)=\kk[-|y|]$.
    \end{enumerate}
    The objects $E^\circ_y$ form an exceptional collection
    with respect to the opposite poset $\CP^\circ$.
    This collection is~called the \emph{graded left dual,} and
    $\langle E^\circ_y\mid y\in\CP^\circ \rangle=\langle E_x\mid x\in\CP \rangle$.
\end{lemma}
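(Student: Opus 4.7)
The plan is to reduce everything to the classical (linearly ordered) left dual exceptional collection by choosing a linear extension of the partial order. Fix any linear extension, so that the elements of $\CP$ are enumerated as $x_1,\ldots,x_l$ with $x_i\prec x_j$ implying $i<j$. Then $(E_{x_1},\ldots,E_{x_l})$ is an exceptional collection in the usual sense, and its standard left dual $(E^\vee_{x_l},\ldots,E^\vee_{x_1})$ satisfies $\Ext^\bullet(E_{x_i},E^\vee_{x_j})=0$ for $i\neq j$ and $\Ext^\bullet(E_{x_i},E^\vee_{x_i})=\kk[1-i]$. I would then set $E^\circ_{x_i}:=E^\vee_{x_i}[i-1-|x_i|]$. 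A direct shift computation shows that conditions (1) and (2) hold, since shifts preserve vanishing and $\kk[1-i]$ becomes $\kk[-|x_i|]$ after the prescribed shift.

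For uniqueness, I would use that the objects $E^\vee_{x_1},\ldots,E^\vee_{x_l}$ generate the subcategory $\langle E_x\mid x\in\CP\rangle$, and that any $F$ in this subcategory sits in a canonical Postnikov tower whose successive cones are shifts of the $E^\vee_{x_j}$ controlled by the graded vector spaces $\Ext^\bullet(E_{x_j},F)$. If $F$ satisfies (1) and (2) for $y=x_i$, then all these Ext groups vanish except the one with $j=i$, which equals $\kk[-|x_i|]$. The tower then collapses to a single shift of $E^\vee_{x_i}$, and the shift is pinned down by (2); hence $F\simeq E^\circ_{x_i}$.

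For the collection property, each $E^\circ_y$ is exceptional as a shift of the exceptional object $E^\vee_y$. The nontrivial semiorthogonality with respect to $\CP^\circ$ asserts $\Ext^\bullet(E^\circ_x,E^\circ_y)=0$ whenever $y\not\preceq x$ in $\CP$ -- equivalently, whenever $x\prec y$ or $x$ and $y$ are incomparable. In either case there is a linear extension in which $x$ precedes $y$; by the existence and uniqueness already established, $E^\circ_x$ and $E^\circ_y$ are realized as shifts of the corresponding standard dual objects for this extension, and those satisfy the required vanishing by exceptionality of the standard left dual. The equality $\langle E^\circ_y\mid y\in\CP^\circ\rangle=\langle E_x\mid x\in\CP\rangle$ then follows from the analogous fact for the ungraded dual, since shifts do not change the generated subcategory.

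The main subtlety I expect is the interplay between different linear extensions: the object $E^\vee_y$ depends on the extension, but the graded dual $E^\circ_y$ does not, thanks to the uniqueness argument. Thus uniqueness is really the load-bearing step; once it is in place, everything else is a matter of bookkeeping with shifts.
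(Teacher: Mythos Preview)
Your approach is correct and matches the paper's intent. The paper does not actually give a proof of this lemma; it presents it as a ``lemma-definition'' and remarks just before the statement that ``the reader will immediately check that once the collection is linearly ordered, the left dual differs from the graded left dual by shifts of objects.'' Your proposal carries out precisely this check: you linearize, invoke the standard left dual, shift by $i-1-|x_i|$, and then use uniqueness (via the Postnikov tower/filtration argument) to show independence of the chosen linear extension, which in turn yields the $\CP^\circ$-semiorthogonality. This is exactly the argument the paper leaves implicit.
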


\begin{remark}
    If the poset $\CP$ is linearly ordered, then the definition of the
    graded left dual agrees with the definition of the left dual.
\end{remark}

\begin{example}
    In Example~\ref{ex:gr-ex} we have seen that $D^b(\Gr(k, V))$
    admits a full exceptional collection indexed by the poset~$\YD_{k, n-k}$:
    \begin{equation*}
        D^b(\Gr(k, V)) = \left\langle \Sigma^\lambda \CU^* \mid \lambda \in \YD_{k,n-k}\right\rangle,
    \end{equation*}
    where $\CU$ is the tautological rank $k$ subbundle in $V$.
    The dual collection constructed by Kapranov,
    \begin{equation*}
        D^b(\Gr(k, V)) = \left\langle \Sigma^{\lambda^T} \CU^\perp \mid \lambda \in \YD_{k,n-k}\right\rangle,
    \end{equation*}
    where $\CU^\perp=(V/\CU)^*$, and $\lambda^T$ denotes the transposed diagram,
    is a \emph{graded} left dual.
\end{example}

We leave the definition of the graded right dual to the reader, indicating that
for the right dual the grading function should be taken for the opposite poset.

\subsubsection{Categorical lemma}
Let $\CT=\langle E_x\mid x\in\CP\rangle$ be a~proper triangulated category
generated by a~graded exceptional collection.
Denote by $\langle G_x\mid x\in\CP^\circ\rangle$ its graded left dual.
Assume that $F:\CT\to \CT'$ is an~exact functor into another proper triangulated
category $\CT'$. Put $G'_x=F(G_x)$ for all $x\in\CP^\circ$ and assume
that $\langle G'_x\mid x\in\CP^\circ\rangle$ form a graded exceptional
collection in $\CT'$. Denote by $\langle E'_x\mid x\in\CP\rangle$
its graded right dual.

\begin{lemma}\label{lm:adj}
    For all $x, y\in \CP$ one has
    \begin{equation*}
        \Ext^\bullet_{\CT'}(E'_x, F(W_y))\simeq \Ext^\bullet_{\CT}(E_x, E_y).
    \end{equation*}
\end{lemma}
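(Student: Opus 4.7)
The plan is to realise $F(E_y)$ as a Postnikov convolution of shifted copies of the $G'_z = F(G_z)$, then pair with $E'_x$ and use orthogonality to isolate a single surviving term (I read $F(W_y)$ in the statement as $F(E_y)$).

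First I would observe that the original collection $\{E_z\}_{z\in\CP}$ is precisely the graded right dual of $\{G_z\}_{z\in\CP^\circ}$: indeed, the relation $\Ext^\bullet(E_z,G_z)=\kk[-|z|]$ (with $|-|$ the grading on $\CP=(\CP^\circ)^\circ$) is exactly what the defining property of the graded right dual of $\{G_z\}_{z\in\CP^\circ}$ demands, so the uniqueness clause in Lemma~\ref{lm:gr-dual} (applied in the opposite poset) identifies the two. Since $\{G_z\}_{z\in\CP^\circ}$ is a full exceptional collection in $\CT$, any object $A\in\CT$ admits a canonical Postnikov filtration whose graded factors are
\begin{equation*}
    \RHom_\CT(E_z,A)\otimes G_z[|z|], \qquad z\in\CP.
\end{equation*}
The shift $[|z|]$ is forced by the normalisation $\Ext^\bullet(E_z,G_z[|z|])=\kk$ in degree zero, which is exactly what makes the iterated cone recover $A$.

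Applying the exact functor $F$ to this filtration for $A=E_y$, and using that $F$ commutes with cones, shifts and tensoring with graded $\kk$-vector spaces, we obtain a Postnikov convolution of $F(E_y)$ in $\CT'$ with graded factors $\RHom_\CT(E_z,E_y)\otimes G'_z[|z|]$. Next I would apply $\RHom_{\CT'}(E'_x,-)$ and inspect the resulting iterated distinguished triangles. The defining orthogonality of the graded right dual gives $\RHom_{\CT'}(E'_x,G'_z)=0$ for every $z\neq x$, so all factors with $z\neq x$ contribute zero, and only the $z=x$ factor survives, yielding
\begin{equation*}
    \RHom_{\CT'}(E'_x,F(E_y)) \simeq \RHom_\CT(E_x,E_y)\otimes\RHom_{\CT'}(E'_x,G'_x)[|x|] \simeq \RHom_\CT(E_x,E_y)\otimes\kk[-|x|][|x|] \simeq \RHom_\CT(E_x,E_y),
\end{equation*}
which is the desired isomorphism.

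The main obstacle is purely the bookkeeping of shifts. One has to verify that the grading function used in the defining property $\Ext^\bullet(E'_x,G'_x)=\kk[-|x|]$ of the graded right dual of $\{G'_z\}_{z\in\CP^\circ}$ is literally the original grading on $\CP=(\CP^\circ)^\circ$, so that it cancels precisely against the shift $[|z|]$ built into the Postnikov convolution associated with the graded left dual $\{G_z\}$. Once this is unwound, the shifts cancel automatically and the orthogonality of the graded right dual collapses the spectral sequence (or, equivalently, the tower of distinguished triangles) to a single term.
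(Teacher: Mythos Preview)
Your argument is correct and different from the paper's. The paper exploits that $\CT$, being generated by a full exceptional collection, is saturated, so $F$ admits a left adjoint $F^*$; it then verifies $F^*(E'_x)\simeq E_x$ by checking the defining conditions of the graded right dual against $\{G_z\}$, and the statement follows by adjunction in one line. Your approach instead resolves $E_y$ as a Postnikov tower built from the $G_z$, pushes this tower through $F$, and collapses the result against $E'_x$ using orthogonality. Both routes ultimately rest on the same orthogonality relations, but they package them differently: the paper's argument is shorter and yields the slightly stronger intermediate fact $F^*(E'_x)\simeq E_x$, whereas yours avoids invoking existence of adjoints altogether and is in that sense more self-contained. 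Your caveat about the shift bookkeeping is well placed; once the grading conventions for left/right duals on $\CP$ versus $\CP^\circ$ are matched (as you indicate), the cancellation is automatic.
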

\begin{proof}
    Since the category $\CT$ is saturated, the functor $F$ has a left adjoint $F^*$
    (see~\cite{Bondal1989}). In particular,
    \begin{equation*}
        \Ext^\bullet_{\CT'}(E'_x, F(E_y))\simeq \Ext^\bullet_{\CT}(F^*(E'_x), E_y).
    \end{equation*}
    We claim that $F^*(E'_x)\simeq E_x$.
    On the one hand, $F^*(E'_x) \in \langle E_x\mid x\in\CP\rangle = \langle G_x\mid x\in\CP^\circ\rangle$.
    On the other hand, for any $y\in \CP$ one has
    \begin{equation*}
        \Ext^\bullet_{\CT}(F^*(E'_x), G_y)\simeq
        \Ext^\bullet_{\CT'}(E'_x, G'_y) = \begin{cases}
            \kk[-|x|] & \text{if}\ x = y, \\
            0 & \text{otherwise}.
        \end{cases}
    \end{equation*}
    These are precisely the defining conditions of the graded right dual collection
    to $\langle G_x\mid x\in\CP^\circ\rangle$, which is $\langle E_x\mid x\in\CP\rangle$.
\end{proof}

\subsubsection{Spectral sequence associated with the graded dual}
As stated in the introduction, dual collections provide a particularly nice
computational tool. Let $(E_1,E_2,\ldots, E_l)$ be an exceptional collection
in $\CT$. Recall that a cohomological functor from $\CT$ to an abelian category $\CA$
is an additive functor $F:\CT\to \CA$ which takes distinguished triangles to exact sequences.
As usual, we denote by $F^i$ the composition $F\circ [i]$.

\begin{proposition}[{\cite[Section 2.7.3]{Gorodentsev2004}}] 
    Let $G\in \langle E_1,E_2,\ldots,E_n\rangle$. There is a spectral sequence
    with the first page given by
    \begin{equation}\label{eq:gor-ss}
        E^{p,q}_1 = \bigoplus_{i+j=q}\Ext^{-i}(G, E^\vee_{p+1})^*\otimes F^j(E_{p+1})
    \end{equation}
    converging to $F^{p+q}(G)$.
\end{proposition}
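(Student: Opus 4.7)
The plan is to build a Postnikov-type filtration of $G$ from the semiorthogonal decomposition induced by the exceptional collection, identify its graded pieces via the left dual collection, and apply $F$ termwise. In detail: since each $\langle E_p \rangle$ is admissible in $\CT$, the semiorthogonal decomposition $\CT = \langle E_1, \ldots, E_n \rangle$ produces a tower
$$0 = G_0 \to G_1 \to G_2 \to \cdots \to G_n = G$$
with $G_p \in \langle E_1, \ldots, E_p\rangle$ and each cone $C_{p+1} := \Cone(G_p \to G_{p+1})$ lying in $\langle E_{p+1} \rangle$. Because $E_{p+1}$ is exceptional and generates $\langle E_{p+1} \rangle$, each cone takes the form $C_{p+1} \simeq V_{p+1} \otimes E_{p+1}$ for some graded $\kk$-vector space $V_{p+1}$.

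To identify $V_{p+1}$ I would apply $\Hom(-, E^\vee_{p+1})$ to the tower. By the defining orthogonality $\Ext^\bullet(E_q, E^\vee_{p+1}) = 0$ for $q \neq p+1$, both $G_p$ and $\Cone(G_{p+1} \to G)$ (which lies in $\langle E_{p+2}, \ldots, E_n\rangle$) have vanishing $\Ext$ into $E^\vee_{p+1}$. The resulting long exact sequences then give $\Ext^\bullet(G, E^\vee_{p+1}) \simeq \Ext^\bullet(C_{p+1}, E^\vee_{p+1})$, and combined with $\Ext^\bullet(E_{p+1}, E^\vee_{p+1}) = \kk[-p]$ this unpacks to $V^k_{p+1} \simeq \Ext^{p-k}(G, E^\vee_{p+1})^*$.

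Finally, applying the exact cohomological functor $F$ to the bounded Postnikov tower yields an exact couple whose associated spectral sequence converges to $F^{p+q}(G)$, with first page $E^{p,q}_1 = F^{p+q}(C_{p+1})$. Expanding
$$F^{p+q}(C_{p+1}) = \bigoplus_k V^k_{p+1} \otimes F^{p+q-k}(E_{p+1}),$$
substituting the identification of $V^k_{p+1}$, and making the change of variables $i = k-p$, $j = q-i$ rewrites this as the stated formula. The argument is entirely classical; the only real subtlety is keeping the cohomological shifts consistent so that the $E_1$ page matches the exact form written in the statement.
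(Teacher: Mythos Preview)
Your argument is correct and is precisely the standard proof: build the Postnikov tower from the semiorthogonal decomposition, identify the graded pieces via the defining orthogonality of the left dual, and read off the exact couple. Your bookkeeping with the shifts is right; using $\Ext^\bullet(E_{p+1},E^\vee_{p+1})=\kk[-p]$ (which matches the paper's convention $\Ext^\bullet(E_i,E_i^\vee)=\kk[-i+1]$) and the substitution $i=k-p$, $j=q-i$ indeed reproduces the stated $E_1$-page.

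Note, however, that the paper does not actually give a proof of this proposition: it simply cites \cite[Section~2.7.3]{Gorodentsev2004} and moves on. So there is no ``paper's own proof'' to compare against. What you have written is essentially the argument one finds in the cited reference, and it is entirely adequate here.
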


We will be interested in the case where $\CT=D^b(\mathcal{A})$ is the bounded
derived category of an abelian category $\mathcal{A}$ (for instance, the bounded
derived category of coherent sheaves on a smooth projective variety), and
$F=\mathcal{H}^0$ is the usual 0-th cohomology functor.
Assume that the exceptional collection $(E_1,E_2,\ldots, E_l)$ consists of pure objects
(for instance, of coherent sheaves). Then the spectral sequence~\eqref{eq:gor-ss}
simplifies to
\begin{equation}\label{eq:ss-pure}
    E^{p,q}_1 = \Ext^{-q}(G, E^\vee_{p+1})^*\otimes E_{p+1}\ \Rightarrow\ \mathcal{H}^{p+q}(G).
\end{equation}

In the case of an exceptional collection indexed by a graded poset $\CP$ the spectral
sequence~\eqref{eq:ss-pure} becomes
\begin{equation}\label{eq:ss-graded}
    E^{p,q}_1 = \bigoplus_{x\in\CP,\ |x|=p}\Ext^{-q}(G, E^\circ_{x})^*\otimes E_{x}\ \Rightarrow\ \mathcal{H}^{p+q}(G).
\end{equation}

\subsection{Exceptional collections on Lagrangian Grassmannians}\label{ssec:fec-lgr}
Let $V$ be a $2n$-dimensional vector space over $\kk$ equipped with a non-degenerate skew-symmetric
bilinear form. We denote by $\LGr(n, V)$ the Lagrangian Grassmannian of maximal isotropic subspaces
in $V$, and by $\CU$ the tautological rank $n$ bundle on $\LGr(n, V)$. The generator of the Picard
group of $\LGr(n, V)$ is $\CO(1)\simeq \Lambda^n\CU^*$.

The Lagrangian Grassmannian comes with an action of the symplectic group $\bfG=\SP_{2n}$.
We denote by $\sfP$ the set of weakly decreasing integer sequences of length $n$:\footnote{It is actually
the dominant cone of the Levi subgroup $\GL_n$ of the corresponding parabolic in $\bfG$.}
\begin{equation*}
    \sfP=\{\lambda\in \ZZ^n \mid \lambda_1\geq \lambda_2\geq \cdots\geq \lambda_n\}.
\end{equation*}
Given $\lambda\in\sfP$, we denote by $\Sigma^\lambda$ the corresponding Schur functor.
We follow the convention under which $\Sigma^{(p, 0, \ldots, 0)}=S^p$.

\subsubsection{Exceptional blocks} In what follows we will consider subcategories in equivariant
derived categories. Since a collection of equivariant objects generate a full triangualated subcategory
both in the usual and the equivariant derived category, in the latter case we will use an additional
subscript and write $\langle-\rangle_\bfG$.

It is well known that every irreducible equivariant vector bundle on $\LGr(n, V)$ is isomorphic
to $\Sigma^\lambda\CU^*$ for some $\lambda\in\sfP$. Moreover, these form an infinite full exceptional
collection in the equivariant derived category:
\begin{equation*}
    D^b_\bfG(\LGr(n, V)) = \left\langle \Sigma^\lambda\CU^* \mid \lambda\in \sfP^{\circ}\right\rangle_\bfG,
\end{equation*}
where $\sfP$ is treated as an infinite poset with the partial order\footnote{This order is natural
from the combinatorial point of view, yet it differs from the order used in~\cite{Kuznetsov2016}.} given by
\begin{equation*}
    \lambda\preceq \mu\quad \text{if and only if}\quad \lambda_i\leq \mu_i\ \text{for all}\ i=1,\ldots, n,
\end{equation*}
and $\sfP^\circ$ is its opposite.

Any subset $S\subseteq \sfP$ with the induced partial order produces an exceptional collection
$\langle \Sigma^\lambda\CU^* \mid \lambda\in S^{\circ}\rangle_\bfG$.
If $S$ is finite and graded, we denote by
\begin{equation*}
    \langle \CE^\lambda \mid \lambda\in S\rangle_\bfG \quad \text{and} \quad \langle \CF^\lambda \mid \lambda\in S\rangle_\bfG
\end{equation*}
the graded right and left duals in $D^b_\bfG(\LGr(n, V))$ to $\langle \Sigma^\lambda\CU^* \mid \lambda\in S^{\circ}\rangle_\bfG$,
respectively.\footnote{A priori, the objects $\CE^\lambda$ and $\CF^\lambda$ depend on the choice
of both $\lambda$ and $S$.}

Kuznetsov and Polishchuk came up with a very simple (yet hard to check in practice) condition
under which the objects $\CE^\lambda$ form an exceptional collection in the non-equivariant category
(we do not distigush between the objects in the equivariant derived category and their images under
the forgetful functor).
\begin{definition}[{See~\cite[Definition~3.1]{Kuznetsov2016}}] 
    A subset $S\subset \sfP$ is called an \emph{exceptional block} if for all $\lambda,\mu\in S$
    the canonical map
    \begin{equation*}
        \bigoplus_{\nu\in S}\Ext^\bullet_\bfG(\Sigma^\lambda\CU^*, \Sigma^\nu\CU^*)\otimes \Hom(\Sigma^\nu\CU^*, \Sigma^\mu\CU^*)
        \to \Ext^\bullet(\Sigma^\lambda\CU^*, \Sigma^\mu\CU^*)
    \end{equation*}
    is an isomorphism.
\end{definition}

In plain words the block condition says that every extension between a pair of objects can be uniquely decomposed
as a sum of equivariant extensions followed by homomorphisms. What is rather surprising is that even though
the original objects $\Sigma^\lambda \CU^*$ for $\lambda\in S$ almost never form an exceptional collection
in the non-equivariant category, the objects of the right dual do form an exceptional collection in
the~non-equivariant category as long as $S$ is a block.

\begin{proposition}[{See~\cite[Proposition~3.9]{Kuznetsov2016}}]\label{prop:rblock}
    If $S\subset \sfP$ is an exceptional block, then the corresponding right dual objects
    form an exceptional collection in $D^b(\LGr(n, V))$,
    \begin{equation*}
        \left\langle \CE^\lambda \mid \lambda \in S \right\rangle \subset D^b(\LGr(n, V)).
    \end{equation*}
\end{proposition}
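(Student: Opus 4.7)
The approach is to compute $\Ext^\bullet(\CE^\lambda, \CE^\mu)$ non-equivariantly by reducing, via the block condition, to data that is already known in the equivariant derived category.

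The main technical tool is an extended version of the block condition: for any objects $A, B$ in the equivariant subcategory $\CA := \langle \Sigma^\nu\CU^* \mid \nu \in S\rangle_\bfG$, I would show that the natural composition
$$\bigoplus_{\nu \in S} \Ext^\bullet_\bfG(A, \Sigma^\nu\CU^*) \otimes_\kk \Hom(\Sigma^\nu\CU^*, B) \longrightarrow \Ext^\bullet(A, B)$$
is an isomorphism. The proof is by devissage: both sides are cohomological in each argument, they agree on the generators (this being precisely the block condition), and since every object of $\CA$ is a finite iterated cone over the generators, the five-lemma extends the isomorphism to all $A, B$.

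Applying this with $A = \CE^\lambda$ and using the defining property $\Ext^\bullet_\bfG(\CE^\lambda, \Sigma^\nu\CU^*) = \delta_{\nu\lambda}\kk[-|\lambda|]$ of the equivariant graded right dual, only the summand $\nu = \lambda$ survives, yielding
$$\Ext^\bullet(\CE^\lambda, B) \cong \Hom(\Sigma^\lambda\CU^*, B)[-|\lambda|] \qquad\text{for any } B \in \CA.$$
Specializing $B = \CE^\mu$, the problem reduces to computing $\Hom(\Sigma^\lambda\CU^*, \CE^\mu)$ non-equivariantly. I would do this by writing $\CE^\mu$ through the Postnikov tower associated with the graded left dual $\CF^\xi$ and using the equivariant orthogonality of the $\CE^\nu$'s together with the extended block condition on the other slot. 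The outcome is that $\Hom(\Sigma^\lambda\CU^*, \CE^\mu)$ equals $\kk$ (concentrated in degree $|\lambda|$) if $\lambda = \mu$, and vanishes when $\lambda$ is strictly greater than $\mu$ in the ordering of $S$. After the $[-|\lambda|]$ shift this is exactly the required exceptional collection property.

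\textbf{Main obstacle.} The principal difficulty lies in the extended block condition itself. One must formulate the tensor product so that it correctly encodes the composition of morphisms through the intermediate generators $\Sigma^\nu\CU^*$, and verify that the devissage is compatible with the mixed equivariant/non-equivariant structure on the left-hand side. Once this is in place, the subsequent vanishing of $\Hom(\Sigma^\lambda\CU^*, \CE^\mu)$ for $\lambda \neq \mu$ is essentially formal, though it requires careful bookkeeping of grading shifts through the Postnikov filtration.
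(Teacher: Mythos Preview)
The paper does not supply its own proof of this proposition; it is quoted from \cite[Proposition~3.9]{Kuznetsov2016}. So there is nothing in the paper to compare against directly, and I will assess your argument on its own terms.

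Your overall strategy---extend the block condition by d\'evissage and then plug in $A=\CE^\lambda$---is exactly the right idea and is essentially what Kuznetsov and Polishchuk do. However, the extended block condition as you have written it is not correct. You claim that
\[
\bigoplus_{\nu\in S}\Ext^\bullet_\bfG(A,\Sigma^\nu\CU^*)\otimes_\kk \Hom(\Sigma^\nu\CU^*,B)\ \longrightarrow\ \Ext^\bullet(A,B)
\]
is an isomorphism for all $A,B\in\CA$, arguing that ``both sides are cohomological in each argument''. But the degree-zero functor $\Hom(\Sigma^\nu\CU^*,-)$ is not cohomological, so d\'evissage in $B$ fails. (Indeed, for a generic $B\in\CA$ the right-hand side has contributions in many degrees that the left-hand side cannot see.) Consequently your displayed formula $\Ext^\bullet(\CE^\lambda,B)\cong\Hom(\Sigma^\lambda\CU^*,B)[-|\lambda|]$ is not justified for $B=\CE^\mu$, and the subsequent reduction to computing $\Hom(\Sigma^\lambda\CU^*,\CE^\mu)$ via the Postnikov tower for $\CF^\xi$ is both unnecessary and unclear (the $\CF^\xi$ are equivariant duals, and you have no control over their non-equivariant behaviour at this point).

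The fix is simple and makes the argument shorter. D\'evissage only in the \emph{first} variable is perfectly valid, since $\Ext^\bullet_\bfG(-,\Sigma^\nu\CU^*)$ is cohomological; this already yields
\[
\Ext^\bullet(\CE^\lambda,\Sigma^\mu\CU^*)\ \cong\ \Hom(\Sigma^\lambda\CU^*,\Sigma^\mu\CU^*)
\]
(this is precisely what the present paper quotes as \cite[Corollary~3.8]{Kuznetsov2016}). By Borel--Bott--Weil the right-hand side vanishes unless $\lambda\subseteq\mu$ and equals $\kk$ when $\lambda=\mu$. Since $\CE^\mu\in\langle\Sigma^\alpha\CU^*\mid\alpha\subseteq\mu\rangle$, one immediately obtains $\Ext^\bullet(\CE^\lambda,\CE^\mu)=0$ whenever $\lambda\not\subseteq\mu$; and applying $\Ext^\bullet(\CE^\lambda,-)$ to the triangle $\CE^\lambda\to\Sigma^\lambda\CU^*\to C$ with $C\in\langle\Sigma^\alpha\CU^*\mid\alpha\subsetneq\lambda\rangle$ gives $\Ext^\bullet(\CE^\lambda,\CE^\lambda)\cong\kk$. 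No appeal to the left duals $\CF^\xi$ is needed.
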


It turns out that it is natural to call exceptional blocks \emph{right exceptional blocks,}
and that it is useful to consider left exceptional blocks as well.

\begin{definition}[{See~\cite[Remark~2.12]{Fonarev2022}}]
    A subset $S\subset \sfP$ is called an \emph{left exceptional block} if for all $\lambda,\mu\in S$
    the canonical map
    \begin{equation*}
        \bigoplus_{\nu\in S}\Hom(\Sigma^\lambda\CU^*, \Sigma^\nu\CU^*) \otimes \Ext^\bullet_\bfG(\Sigma^\nu\CU^*, \Sigma^\mu\CU^*) 
        \to \Ext^\bullet(\Sigma^\lambda\CU^*, \Sigma^\mu\CU^*)
    \end{equation*}
    is an isomorphism.
\end{definition}

Recall that both the equivariant and the non-equivariant categories have the dualization functor
compatible with the forgetful functor, sending $E$ to $\RHom(E, \CO)$,
which is an anti-autoequivalence. Since any anti-autoequivalence takes exceptional collections
to exceptional collections (with respect to the opposite order) and left and right (graded) dual
collections to right and left (graded) dual collections respectively, we immediately see that $S$ is a right
exceptional block if and only if $-S$ is a left exceptional block, where
$-S = \{-\lambda \mid \lambda \in S\}$ and $-\lambda = (-\lambda_n, -\lambda_{n-1},\ldots, -\lambda_1)$.
The latter follows from the isomorphism $(\Sigma^\lambda\CU^*)^*\simeq\Sigma^{-\lambda}\CU^*$.
We conclude that the following statement, which is
dual to Proposition~\ref{prop:rblock}, holds.

\begin{proposition} 
    If $S\subset \sfP$ is a left exceptional block, then the corresponding left dual objects
    form an exceptional collection in $D^b(\LGr(n, V))$,
    \begin{equation*}
        \left\langle \CF^\lambda \mid \lambda \in S \right\rangle \subset D^b(\LGr(n, V)).
    \end{equation*}
    Moreover, $\CF^\lambda \simeq (\CE^{-\lambda})^*$.
\end{proposition}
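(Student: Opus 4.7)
The plan is to deduce this proposition from its ``right'' counterpart, Proposition~\ref{prop:rblock}, by transporting everything through the dualization functor $D=\RHom(-,\CO)$, which is the strategy already outlined in the paragraph preceding the statement. Concretely, I would proceed as follows.

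First I would fix notation and recall that $D$ is an anti-autoequivalence of both $D^b(\LGr(n,V))$ and $D^b_\bfG(\LGr(n,V))$, compatible with the forgetful functor, and that on irreducible equivariant bundles it acts by $(\Sigma^\lambda\CU^*)^*\simeq \Sigma^{-\lambda}\CU^*$, where $-\lambda=(-\lambda_n,\ldots,-\lambda_1)$. Using this, I would check that the left block condition for $S$ is literally the image under $D$ of the right block condition for $-S$: applying $D$ to
\begin{equation*}
\bigoplus_{\nu\in S}\Hom(\Sigma^\lambda\CU^*,\Sigma^\nu\CU^*)\otimes\Ext^\bullet_\bfG(\Sigma^\nu\CU^*,\Sigma^\mu\CU^*)\to \Ext^\bullet(\Sigma^\lambda\CU^*,\Sigma^\mu\CU^*)
\end{equation*}
interchanges the two tensor factors (because $D$ swaps $\Hom$ and $\Ext^\bullet_\bfG$ arguments in opposite order) and reindexes by $-\lambda,-\mu,-\nu\in -S$, yielding exactly the right block condition for $-S$. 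Hence $S$ is a left exceptional block iff $-S$ is a right exceptional block.

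Next I would apply Proposition~\ref{prop:rblock} to $-S$ to obtain that the graded right dual objects $\{\CE^\mu\mid \mu\in -S\}$ form an exceptional collection in the non-equivariant category $D^b(\LGr(n,V))$. Now I would apply $D$: since $D$ is an anti-autoequivalence, it sends the exceptional collection $\{\CE^\mu\}_{\mu\in -S}$ to an exceptional collection in $D^b(\LGr(n,V))$ with the order on the index set reversed. The key step is then to identify $D(\CE^{-\lambda})=(\CE^{-\lambda})^*$ with the graded left dual object $\CF^\lambda$ attached to $\lambda\in S$. For this I would verify the three characterizing properties of the graded left dual (generation, orthogonality for $\lambda\neq\mu$, and $\Ext^\bullet(\Sigma^\lambda\CU^*,(\CE^{-\lambda})^*)=\kk[-|\lambda|]$) by rewriting each one through the duality isomorphism $\Ext^\bullet(A,D(B))\simeq \Ext^\bullet(B,D(A))$, which converts them into the defining properties of $\CE^{-\lambda}$ as the graded right dual over $-S$.

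The only real subtlety is bookkeeping: making sure that (i)~the grading function $|{-}|$ on $S$ matches the one on $-S$ under $\lambda\mapsto -\lambda$ so that the shift $[-|\lambda|]$ survives the passage through $D$, and (ii)~the partial order and its opposite get swapped in the correct way, so that the resulting collection is genuinely an exceptional collection indexed by $S$ (not $S^\circ$) after one takes the graded left dual. Both are formal once the conventions are unpacked, so I expect no real obstacle beyond the careful checking of signs, orders, and grading shifts; the heart of the argument is simply that $D$ interchanges ``left'' and ``right'' throughout the whole setup.
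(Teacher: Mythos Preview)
Your proposal is correct and follows essentially the same approach as the paper: the paper's ``proof'' is entirely contained in the paragraph preceding the proposition, which observes that the dualization anti-autoequivalence $D=\RHom(-,\CO)$ interchanges left and right exceptional blocks (via $S\leftrightarrow -S$) and left and right graded duals, so the statement is literally the image of Proposition~\ref{prop:rblock} under $D$. Your write-up simply unpacks this in more detail, including the verification that $(\CE^{-\lambda})^*$ satisfies the defining properties of $\CF^\lambda$; the only care needed, as you note, is the bookkeeping of orders and grading shifts under $\lambda\mapsto -\lambda$, which is formal.
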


The following theorem uses the term \emph{semiorthogonal decomposition.} Recall that full triangulated
subcategories $\CT_1,\CT_2\subseteq\CT$ are called semiorthogonal, one writes $\CT_1\subseteq\CT_2^\perp$,
if $\Hom_\CT(X_2, X_1)=0$ for all $X_1\in\CT_1$ and $X_2\in\CT_2$. A \emph{semiorthogonal decomposition}
of a category $\CT$ is a collection of full triangulated subcategories $\CT_1,\CT_2,\ldots,\CT_r$
such that $\CT_i\subseteq \CT_j^\perp$ for all $1\leq i < j\leq r$ and $\CT$ is the smallest strictly
full triangulated subcategory containing all $\CT_i$. The notation for a semiorthogonal decomposition
is $\CT=\langle\CT_1,\CT_2,\ldots,\CT_r\rangle$. Finally, for a full triangulated subcategory
$\CB\in D^b(\LGr(n, V))$ we denote by $\CB(i)$ its image under the autoequivalence given
by tensor product with the line bundle $\CO(i)$.

\begin{theorem}[{See~\cite[Theorem~9.2]{Kuznetsov2016} and \cite[Theorem~4]{Fonarev2022}}]
    The set
    \begin{equation*}
        B_h=\{\lambda\in \sfP \mid n-h\geq \lambda_1\geq\lambda_2\geq\cdots\geq\lambda_h\geq \lambda_{h+1} = \cdots = \lambda_n = 0\}
    \end{equation*}
    is a right exceptional block for all $h=0,\ldots,n$. Moreover, there is semiorthogonal decomposition
    \begin{equation}\label{eq:kpf}
        D^b(\LGr(n, V)) = \left\langle \CB_0, \CB_1(1), \CB_2(2),\ldots, \CB_n(n)\right\rangle,
    \end{equation}
    where $\CB_h = \langle \CE^\lambda_{B_h} \mid \lambda \in B_h\rangle$.
\end{theorem}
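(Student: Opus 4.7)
The plan is to separate the theorem into two independent assertions: the block property of each $B_h$, and the semiorthogonal decomposition of $D^b(\LGr(n, V))$ into the twisted blocks $\CB_h(h)$. The first is a purely representation-theoretic statement that reduces to a cohomology computation on $\LGr(n, V)$, whereas the second relies on this together with a fullness argument passing through Kapranov's collection on the ambient classical Grassmannian.

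To establish that $B_h$ is a right exceptional block, I would fix $\lambda, \mu \in B_h$ and directly analyse the canonical map from the definition. The equivariant groups $\Ext^\bullet_\bfG(\Sigma^\lambda\CU^*, \Sigma^\nu\CU^*)$ with $\nu \in B_h$ are the $\bfG$-invariants in $H^\bullet(\LGr(n, V), (\Sigma^\lambda\CU^*)^* \otimes \Sigma^\nu\CU^*)$, which can be evaluated with Borel--Weil--Bott after decomposing the tensor product into irreducibles via Littlewood--Richardson. A parallel computation of the non-equivariant side is performed by the same tool. The saving feature is that the weights appearing in $B_h$ sit in the small rectangle $h\times(n-h)$, so the $\SP_{2n}$-dominant weights that contribute are very restricted, and one can match the two sides term by term; this is the content of the block verification in~\cite{Kuznetsov2016}.

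Granted the block property, Proposition~\ref{prop:rblock} produces the exceptional subcategories $\CB_h$. The next step is to show that $\CB_0, \CB_1(1), \ldots, \CB_n(n)$ are pairwise semiorthogonal in the prescribed order. Writing each $\CE^\mu$ as an iterated extension of the irreducibles $\Sigma^\nu\CU^*$ with $\nu\subseteq \mu$ reduces this to verifying $\Ext^\bullet(\Sigma^\mu\CU^*(k), \Sigma^\lambda\CU^*(h))=0$ whenever $h<k$, that is, to the vanishing of $H^\bullet(\LGr(n, V), \Sigma^\alpha\CU^*(h-k))$ for $\alpha$ in an explicit window determined by $\lambda\in B_h$ and $\mu\in B_k$. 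A direct Borel--Weil--Bott analysis, using that the twist $h-k$ is strictly negative and the range of $\alpha$ is constrained, yields the required vanishing.

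The main obstacle is fullness: showing that the subcategory generated by $\CB_0, \CB_1(1), \ldots, \CB_n(n)$ exhausts $D^b(\LGr(n, V))$. The approach of~\cite{Fonarev2022} is to exhibit an explicit set of generators --- for instance, the restrictions of Kapranov's objects $\Sigma^\lambda\CU^*$ from $\Gr(n, V)$ --- and to prove that each of them lies in the candidate subcategory. This can be set up as an induction: the outer objects in Kapranov's collection are already $\CE^\lambda$ for $\lambda$ in an appropriate block $B_h$, while the remaining ones are produced by taking cones of evaluation morphisms whose components live in controlled subcategories. A useful consistency check is the identity $\sum_{h=0}^{n}|B_h|=\sum_{h=0}^{n}\binom{n}{h}=2^n=\rk K_0(\LGr(n, V))$: once semiorthogonality has been verified, matching these ranks closes the argument. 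I expect the production of the explicit filtrations or resolutions that realise this induction to be the most technically demanding part.
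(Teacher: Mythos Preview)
The paper does not give its own proof of this theorem: it is stated in the preliminaries with an explicit attribution (``See~\cite[Theorem~9.2]{Kuznetsov2016} and \cite[Theorem~4]{Fonarev2022}'') and is immediately followed by remarks, not by an argument. So there is nothing in the paper to compare your proposal against; you have sketched a proof of a result the author is simply quoting.

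That said, your outline is broadly faithful to what the cited references do: the block condition for $B_h$ is established in~\cite{Kuznetsov2016} by a delicate Borel--Weil--Bott analysis, semiorthogonality of the twisted blocks is likewise a cohomology vanishing, and fullness is the main contribution of~\cite{Fonarev2022}. One point to correct: your final sentence suggests that, after semiorthogonality, the numerical identity $\sum_h |B_h| = 2^n = \rk K_0(\LGr(n,V))$ ``closes the argument''. It does not. An exceptional collection whose classes span $K_0$ need not be full (the orthogonal complement could be a phantom), so the rank count is only a sanity check. Fullness genuinely requires the generation argument you allude to earlier, and in~\cite{Fonarev2022} this is carried out not by directly absorbing Kapranov's restricted objects but via staircase-type complexes and an auxiliary Lefschetz collection; your inductive picture with cones of evaluation maps is a reasonable heuristic but does not match the actual mechanism used there.
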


Let us make a few remarks. First, $B_h\cong\YD_{h, n-h}$ as (graded) posets.
Second, the object $\CE^\lambda_{B_h}$ for $\lambda\in\YD_{h, n-h}$ has the following
homological description. It is an equivariant object such that
\begin{equation}\label{eq:el}
    \CE^\lambda_{B_h} \in \langle \Sigma^\mu\CU^*\mid \mu \subseteq \lambda\rangle_\bfG \subset D^b_\bfG(\LGr(n, V))
    \quad \text{and} \quad
    \Ext^\bullet_\bfG(\CE^\lambda_{B_h}, \Sigma^\mu\CU^*) = \begin{cases}
        \kk[-|\lambda|] & \text{if}\ \mu=\lambda, \\
        0 & \text{if}\ \mu\subsetneq\lambda.
    \end{cases}
\end{equation}
In particular, it depends only on $\lambda$ (one only needs to know that $\lambda\in\YD_{h,n-h}$
for some $0\leq h \leq n$), so we will abbreviate $\CE^\lambda=\CE^\lambda_{B_h}$.
Finally, by using the dualization anti-autoequivalence we get a semiorthogonal decomposition
\begin{equation*}
    D^b(\LGr(n, V)) = \left\langle \CC_n(-n), \CC_{n-1}(-n+1),\ldots, \CC_1(-1), \CC_0\right\rangle,
\end{equation*}
where $\CC_h = \langle \CF^\lambda_{-B_h} \mid \lambda \in -B_h\rangle$ and $\CF^\lambda$ for $\lambda\in\YD_{h, n-h}$
is an equivariant object which
can be characterized by the following properties:
\begin{equation}\label{eq:fl}
    \CF^\lambda_{-B_h} \in \langle \Sigma^\mu\CU\mid \mu \subseteq \lambda\rangle_\bfG \subset D^b_\bfG(\LGr(n, V))
    \quad \text{and} \quad
    \Ext^\bullet_\bfG(\Sigma^\mu\CU, \CF^\lambda_{-B_h}) = \begin{cases}
        \kk[-|\lambda|] & \text{if}\ \mu=\lambda, \\
        0 & \text{if}\ \mu\subsetneq\lambda.
    \end{cases}
\end{equation}
Again, since $\CF^\lambda_{-B_h}$ depends only on $\lambda$, we will drop the subscript and write $\CF^\lambda$.

\subsubsection{Geometric constructions}
We will need one of the two geometric constructions for the objects $\CF^\lambda$
obtained in~\cite{Fonarev2022}. Since the cases $h=0$ and $h=n$
are trivial ($B_0$ and $B_n$ consist of a single weight $\lambda = (0, 0, \ldots, 0)$,
and $\CF^\lambda=\CE^\lambda=\CO$), let us fix $0 < h < n$. Consider
the diagram
\begin{equation}\label{eq:pqh}
    \begin{tikzcd}[column sep=small]
        & \IFl(h, n; V) \arrow[dl, "p"'] \arrow[dr, "q"] & \\
          \LGr(n, V) & & \IGr(h, V),
        \end{tikzcd}
\end{equation}
where $\IGr(h, V)$ is the Grassmannian of rank $h$ isotropic subspaces
in $V$ and $\IFl(h, n; V)$ is the partial isotropic flag variety.
Denote by $\CW$ the universal rank $h$ bundle on $\IGr(h, V)$
as well as its pullback on $\IFl(h, n; V)$.

\begin{proposition}[{\cite[Lemma~3.4 and Proposition~3.6]{Fonarev2022}}]\label{prop:flgl}
    The bundles $\langle \Sigma^{\mu^T}\CW^* \mid \mu\in \YD_{n-h,h}\rangle$ form an~exceptional
    collection in the non-equivariant derived category $D^b(\IGr(h, V))$.
    Denote its graded left dual by $\langle \CG^\lambda \mid \lambda \in \YD_{n-h, h}^\circ\rangle$.
    Then
    \begin{equation*}
        \CF^\lambda \simeq p_*q^*\CG^\lambda.
    \end{equation*}
\end{proposition}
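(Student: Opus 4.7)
The plan is to show that $p_*q^*\CG^\lambda$ satisfies the two defining conditions of $\CF^\lambda$ in~\eqref{eq:fl}: it lies in $\langle \Sigma^\mu\CU \mid \mu \subseteq \lambda \rangle_\bfG$, and $\Ext_\bfG^\bullet(\Sigma^\mu\CU, p_*q^*\CG^\lambda)$ equals $\kk[-|\lambda|]$ for $\mu = \lambda$ and vanishes for $\mu \subsetneq \lambda$. The auxiliary exceptionality of $\{\Sigma^{\mu^T}\CW^*\}$ on $\IGr(h, V)$, which is the first half of the statement, I would verify by direct computation of the relevant Ext groups using equivariant Borel--Weil--Bott and a block-type argument in the spirit of~\cite{Kuznetsov2016}; although these bundles are exceptional over $\Gr(h, V)$ by Kapranov's theorem, what is nontrivial is that the extra Ext groups introduced upon restriction to the symplectic subvariety $\IGr(h, V)$ still vanish in the required range.

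For the main Ext computation I would apply the adjunction $(p^*, p_*)$ to obtain
\[
\Ext_\bfG^\bullet(\Sigma^\mu\CU, p_*q^*\CG^\lambda) \simeq \Ext_\bfG^\bullet(\Sigma^\mu\CU, q^*\CG^\lambda),
\]
where $\CU$ on the right is understood as the pullback to $\IFl(h, n; V)$. The short exact sequence $0 \to \CW \to \CU \to \CU/\CW \to 0$ on the flag variety induces a filtration of $\Sigma^\mu\CU$ whose associated graded decomposes, with Littlewood--Richardson multiplicities, into tensor products $\Sigma^\alpha\CW \otimes \Sigma^\beta(\CU/\CW)$. Since $\CW$ is pulled back from $\IGr(h, V)$, the projection formula reduces each term to an Ext group on $\IGr(h, V)$ between $\Sigma^\alpha\CW^* \otimes \CG^\lambda$ and the derived pushforward of $\Sigma^\beta(\CU/\CW)$ along $q$. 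The fibres of $q$ are smaller Lagrangian Grassmannians $\LGr(n-h, W^\perp/W)$ on which $\CU/\CW$ restricts to the tautological Lagrangian subbundle, so Borel--Weil--Bott computes these pushforwards explicitly as equivariant bundles on $\IGr(h, V)$.

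Containment in $\langle \Sigma^\mu\CU \mid \mu \subseteq \lambda\rangle_\bfG$ would follow from $\CG^\lambda \in \langle \Sigma^{\nu^T}\CW^* \mid \nu \subseteq \lambda\rangle$ together with a Kapranov-type calculation for $p$, which is the relative Grassmannian bundle $\Gr(h, \CU) \to \LGr(n, V)$ and hence sends Schur functors in $\CW$ to Schur functors in $\CU$. The main obstacle is the combinatorial bookkeeping in the Ext computation: one must verify that the combined effect of the Littlewood--Richardson decomposition, the Borel--Weil--Bott pushforward along $q$, and the defining orthogonality property of $\CG^\lambda$ against $\{\Sigma^{\nu^T}\CW^*\}$ leaves exactly one surviving contribution equal to $\kk[-|\lambda|]$ when $\mu = \lambda$, and that everything cancels when $\mu \subsetneq \lambda$.
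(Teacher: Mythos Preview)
The paper does not prove this proposition: it is quoted verbatim from \cite[Lemma~3.4 and Proposition~3.6]{Fonarev2022}, with no argument given here. So there is no ``paper's own proof'' to compare against, and your sketch is necessarily an attempt to reconstruct the argument from the cited source rather than from the present text.

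That said, your outline is along the right lines and in fact mirrors closely the computation the paper itself carries out later in the proof of Theorem~\ref{thm:resol}: adjunction for $p$, projection formula for $q$, the filtration of $\Sigma^\mu\CU$ coming from $0\to\CW\to\CU\to\CU/\CW\to 0$, relative Borel--Bott--Weil along the $\LGr(n-h,\CW^\perp/\CW)$-fibres of $q$, and finally the defining orthogonality~\eqref{eq:gl} of $\CG^\lambda$. So the mechanism you propose is exactly the one this paper relies on.

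There is one point where your plan is slightly misaligned with the target. You aim to verify the \emph{equivariant} characterization~\eqref{eq:fl} of $\CF^\lambda$, but the objects $\CG^\lambda$ are introduced as the graded left dual in the \emph{non-equivariant} category $D^b(\IGr(h,V))$, and their defining property~\eqref{eq:gl} is non-equivariant. To close the gap you should (i) observe that the mutations defining $\CG^\lambda$ can be performed $\bfG$-equivariantly, so $\CG^\lambda$ and hence $p_*q^*\CG^\lambda$ carry canonical equivariant structures; (ii) note that equivariant $\Ext$ is the invariant part of ordinary $\Ext$, so the vanishings in~\eqref{eq:gl} transfer, and the surviving one-dimensional $\Ext$ is automatically invariant; and (iii) for the containment condition, your Kapranov-type pushforward along $p$ lands you in $\langle \Sigma^{\nu^T}\CU^*\rangle_\bfG$, not $\langle \Sigma^\mu\CU\rangle_\bfG$ as required by~\eqref{eq:fl}. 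Identifying these two equivariant subcategories is a separate lemma (it is precisely the content of \cite[Lemma~3.7]{Fonarev2022}, invoked here in the proof of Lemma~\ref{lm:gen}), and your sketch should flag it rather than absorb it into ``a Kapranov-type calculation''.
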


From now on we will often identify the graded posets $\YD_{h, n-h}$ and $\YD_{n-h,h}$ via transposition
of diagrams.
The graded dual exceptional collection conditions for $\langle \CG^\lambda \mid \lambda \in \YD_{n-h, h}^\circ\rangle$ become
\begin{equation}\label{eq:gl}
    \CG^\lambda \in \langle \Sigma^{\mu}\CW^* \mid \mu\in \YD_{h,n-h}\rangle
    \quad \text{and} \quad
    \Ext^\bullet(\Sigma^{\mu}\CW^*, \CG^\lambda) = \begin{cases}
        \kk[-|\lambda|] & \text{if}\ \mu^T = \lambda, \\
        0 & \text{if}\ \mu\in\YD_{h, n-h}\ \text{and}\ \mu^T\neq \lambda.
    \end{cases}
\end{equation}

\section{Main results}
We continue to use the notation introduced in Section~\ref{ssec:fec-lgr}.
Our first goal is to prove Theorem~\ref{thm:intro-dual}.

\subsection{Dual exceptional collections on Lagrangian Grassmannians}
Recall that characterizations for the objects $\CE^\lambda$ and $\CF^\lambda$
were given in~\eqref{eq:el} and~\eqref{eq:fl} respectively.
The first main result of the paper is~the following theorem.
\begin{theorem}\label{thm:dual}
    Let $0\leq h \leq n$. Then the exceptional collection $\langle \CF^\mu \mid \mu\in \YD_{n-h, h}^\circ \rangle$
    is the graded left dual to $\langle \CE^\lambda \mid \lambda\in \YD_{h, n-h} \rangle$.
    That is, for $\mu\in \YD_{n-h, h}$ one has
    \begin{equation*}
        \CF^\mu \in \langle \CE^\lambda \mid \lambda\in\YD_{h, n-h}\rangle
        \quad \text{and} \quad
        \Ext^\bullet(\CE^\lambda, \CF^\mu) = \begin{cases}
            \kk[-|\lambda|] & \text{if}\ \mu = \lambda^T, \\
            0 & \text{if}\ \mu\in\YD_{n-h, h}\ \text{and}\ \mu\neq \lambda^T.
        \end{cases}
    \end{equation*}
\end{theorem}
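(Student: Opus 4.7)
The plan is to apply the categorical Lemma~\ref{lm:adj} to the functor $F = p_*q^*\colon D^b(\IGr(h,V)) \to D^b(\LGr(n,V))$ and the graded exceptional collection $\langle \Sigma^{\mu^T}\CW^* \mid \mu \in \YD_{n-h,h}\rangle$ provided by Proposition~\ref{prop:flgl}. Its graded left dual is by definition $\{\CG^\mu\}$, and $F(\CG^\mu) = \CF^\mu$ by the same proposition. The dualization anti-autoequivalence turns the right exceptional block $B_h$ into the left exceptional block $-B_h$, so the analogue of Proposition~\ref{prop:rblock} ensures that $\{\CF^\mu\}_{\mu \in \YD_{n-h,h}^\circ}$ is a graded exceptional collection in $D^b(\LGr(n,V))$, and the hypotheses of Lemma~\ref{lm:adj} are satisfied.

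First I would verify the containment $\CF^\mu \in \langle \CE^\rho \mid \rho \in \YD_{h,n-h}\rangle$. Since $\CG^\mu \in \langle \Sigma^\nu\CW^* \mid \nu \in \YD_{h,n-h}\rangle$ by~\eqref{eq:gl}, and relative Borel--Weil applied to the Grassmannian bundle $p\colon \IFl(h,n;V) \to \LGr(n,V)$ gives $p_*q^*\Sigma^\nu\CW^* \simeq \Sigma^\nu\CU^*$ for $\nu \in \YD_{h,n-h}$, we obtain $\CF^\mu = F(\CG^\mu) \in \langle \Sigma^\nu\CU^* \mid \nu \in \YD_{h,n-h}\rangle$. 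This subcategory coincides with $\langle \CE^\rho \mid \rho \in \YD_{h,n-h}\rangle$ because every $\CE^\rho$ is by~\eqref{eq:el} an object of the former, and both collections have the same cardinality equal to the rank of the corresponding piece of $K_0$.

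Next, Lemma~\ref{lm:adj} yields, for all $\mu, \nu \in \YD_{n-h,h}$,
\begin{equation*}
\Ext^\bullet(E'_\mu, \Sigma^{\nu^T}\CU^*) \simeq \Ext^\bullet_{\IGr(h,V)}(\Sigma^{\mu^T}\CW^*, \Sigma^{\nu^T}\CW^*),
\end{equation*}
where $E'_\mu$ denotes the graded right dual to $\{\CF^\mu\}$ inside $\langle \CF^\mu\rangle$. The heart of the proof is then the identification $E'_\mu = \CE^{\mu^T}$: both objects lie in the same subcategory by Step~1, so it suffices to verify that they are characterized by the same Ext values. The values for $\CE^{\mu^T}$ come from the equivariant characterization~\eqref{eq:el} translated to the non-equivariant setting via the block condition on $B_h$, while those for $E'_\mu$ come from the displayed isomorphism together with the standard analysis of Ext between Schur powers of $\CW^*$ on $\IGr(h,V)$. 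Once the identification is in place, the graded right dual property $\Ext^\bullet(E'_\mu, \CF^\nu) = \kk[-|\mu|]$ for $\mu = \nu$ (and zero otherwise) becomes, after setting $\lambda = \mu^T$, precisely the claim of Theorem~\ref{thm:dual}. The main obstacle I anticipate is this identification; equivalently, verifying that $F$ restricts to an equivalence between $\langle \Sigma^{\mu^T}\CW^* \mid \mu \in \YD_{n-h,h}\rangle$ and $\langle \CE^\rho \mid \rho \in \YD_{h,n-h}\rangle$, which demands a careful comparison of Ext groups between the two isotropic Grassmannians and consistent tracking of cohomological shifts through the block conditions on both sides.
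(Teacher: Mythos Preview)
Your overall strategy coincides with the paper's: apply Lemma~\ref{lm:adj} to $F=p_*q^*$ and the collection $\langle\Sigma^{\lambda}\CW^*\rangle$ on $\IGr(h,V)$, then identify the resulting graded right dual $E'_\mu$ of $\langle\CF^\nu\rangle$ with $\CE^{\mu^T}$. You also correctly locate the obstacle at this identification.

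The gap is in how you propose to close it. Saying that $E'_\mu$ and $\CE^{\mu^T}$ ``are characterized by the same Ext values'' against the bundles $\Sigma^\rho\CU^*$ is not a proof: these bundles do \emph{not} form an exceptional collection in $D^b(\LGr(n,V))$, so two objects in the subcategory they generate can have isomorphic Ext groups against all of them without being isomorphic. One still has to produce a morphism $E'_\mu\to\CE^{\mu^T}$ and show its cone vanishes. Moreover, for the Ext comparison itself you would need $\Ext^\bullet_{\IGr}(\Sigma^{\mu^T}\CW^*,\Sigma^\rho\CW^*)\simeq\Hom_{\LGr}(\Sigma^{\mu^T}\CU^*,\Sigma^\rho\CU^*)$ for \emph{all} $\rho\in\YD_{h,n-h}$, which is not obvious and not what the paper uses.

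The paper circumvents both issues by applying Lemma~\ref{lm:adj} not with the full poset $\YD_{h,n-h}$ but, for each fixed $\nu$, with the sub-poset $\CP=\{\lambda\mid\lambda\subseteq\nu\}$. Then only the values of $\Ext^\bullet(-,\Sigma^\mu\CU^*)$ for $\mu\subseteq\nu$ are needed, and these are \emph{trivially} the same for both objects: zero for $\mu\subsetneq\nu$ and $\kk$ for $\mu=\nu$ (by exceptionality on $\IGr$ for $\CTE^\nu$, and by \cite[Corollary~3.8]{Kuznetsov2016} plus Borel--Bott--Weil for $\CE^\nu$). Crucially, Lemma~\ref{lm:gen} (whose proof relies on \cite[Lemma~3.7]{Fonarev2022}, not a $K_0$-rank count) shows both objects lie in the \emph{smaller} subcategory $\langle\Sigma^\mu\CU^*\mid\mu\subseteq\nu\rangle$. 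The equivariant construction of $\CE^\nu$ then supplies a triangle $\CE^\nu\to\Sigma^\nu\CU^*\to C_\phi$ with $C_\phi\in\langle\Sigma^\mu\CU^*\mid\mu\subsetneq\nu\rangle$; since $\Ext^\bullet(\CTE^\nu,C_\phi)=0$, the nonzero element of $\Hom(\CTE^\nu,\Sigma^\nu\CU^*)$ lifts to $\psi\colon\CTE^\nu\to\CE^\nu$, and the cone of $\psi$ is orthogonal to every generator of the subcategory it lives in, hence zero. Your $K_0$ argument only identifies the full blocks $\langle\CF^\nu\rangle=\langle\CE^\rho\rangle$, which is not fine enough for this step.
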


The proof of Theorem~\ref{thm:dual} will take the rest of this subsection.
Our strategy is to compare the objects~$\CE^\lambda$ with the graded
right dual exceptional collection to $\langle \CF^\mu \mid \mu\in \YD_{n-h, h}^\circ \rangle$.
Since the cases $h=0$ and $h=n$ are trivial (all the objects considered
are isomorphic to $\CO$), we assume that $0 < h < n$.

\begin{lemma}\label{lm:gen}
    Let $\nu\in \YD_{h,n-h}$. The following subcategories coincide in $D^b(\LGr(n, V))$:
    \begin{equation*}
        \langle \CF^\mu \mid \mu\subseteq \nu^T \rangle = \langle \CE^\lambda \mid \lambda\subseteq \nu \rangle
        = \langle \Sigma^\lambda\CU^* \mid \lambda\subseteq \nu \rangle,
    \end{equation*}
    where $\langle \Sigma^\lambda\CU^* \mid \lambda\subseteq \nu \rangle$ is the smallest strictly full triangulated subcategory in $D^b(\LGr(n, V))$ containing the~corresponding
    objects.
\end{lemma}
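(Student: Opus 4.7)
The plan is to prove the two non-trivial equalities separately. The middle equality $\langle \CE^\lambda \mid \lambda\subseteq \nu \rangle = \langle \Sigma^\lambda\CU^* \mid \lambda\subseteq \nu \rangle$ follows from \eqref{eq:el}: restricting the $\Ext$-conditions to the sub-poset $\{\lambda : \lambda \subseteq \nu\}$ realizes $\{\CE^\lambda\}_{\lambda\subseteq\nu}$ as the graded right dual of $\{\Sigma^\lambda \CU^*\}_{\lambda\subseteq\nu}$ in $D^b_\bfG(\LGr(n, V))$, so they generate the same equivariant subcategory. Applying the exact forgetful functor and inducting on $|\lambda|$ via the extension triangles underlying \eqref{eq:el}, one shows each $\Sigma^\lambda\CU^*$ lies in $\langle \CE^\mu \mid \mu \subseteq \lambda\rangle$ non-equivariantly, and conversely, giving the claimed non-equivariant equality.

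For the inclusion $\langle \CF^\mu \mid \mu \subseteq \nu^T\rangle \subseteq \langle \Sigma^\lambda\CU^* \mid \lambda \subseteq \nu\rangle$, I would invoke the geometric description of Proposition~\ref{prop:flgl}: $\CF^\mu \simeq p_*q^*\CG^\mu$ for $\mu \in \YD_{n-h, h}$. Since $\IFl(h, n; V) \cong \Gr_{\LGr}(h, \CU)$ is a Grassmann bundle, fiberwise Borel--Weil--Bott gives $p_*(q^*\Sigma^\tau\CW^*) \simeq \Sigma^\tau\CU^*$ for every $\tau \in \YD_{h, n-h}$. The characterization \eqref{eq:gl} of $\CG^\mu$ as the graded left dual to $\Sigma^{\mu^T}\CW^*$ forces it into the sub-block $\langle \Sigma^\tau\CW^* \mid \tau \subseteq \mu^T\rangle$ in $D^b(\IGr(h, V))$, by the standard fact that a graded left dual in a poset-indexed exceptional collection lives in the admissible subcategory corresponding to its indexing ideal. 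Applying $p_*q^*$ then yields $\CF^\mu \in \langle \Sigma^\tau\CU^* \mid \tau \subseteq \mu^T\rangle$, and since $\mu \subseteq \nu^T$ forces $\mu^T \subseteq \nu$, the asserted inclusion follows.

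Finally, I would upgrade the inclusion to equality by a Grothendieck-group comparison. Both $\langle \CF^\mu \mid \mu \subseteq \nu^T\rangle$ and $\langle \CE^\lambda \mid \lambda \subseteq \nu\rangle$ are admissible subcategories of $D^b(\LGr(n, V))$ generated by exceptional collections, by Proposition~\ref{prop:rblock} (applied to $B_h$) and its left-block dual (applied to $B_{n-h}$). Transposition provides a bijection $\{\lambda \subseteq \nu\} \leftrightarrow \{\mu \subseteq \nu^T\}$, so the two collections have a common length $k$. The orthogonal complement of the smaller subcategory in the larger one is itself admissible, hence generated by an exceptional collection, and its Grothendieck group has rank $k - k = 0$; so this complement is zero and the inclusion is in fact an equality. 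The main obstacle is the second step---pinning down the combinatorial bound $\tau \subseteq \mu^T$ inside the geometric resolution of $\CF^\mu$---which rests on the Borel--Weil--Bott identification $p_*q^*\Sigma^\tau\CW^* \simeq \Sigma^\tau\CU^*$ and on the sub-block containment for $\CG^\mu$ on $\IGr(h, V)$.
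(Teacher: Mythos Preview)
Your first step (the equality $\langle \CE^\lambda\rangle = \langle \Sigma^\lambda\CU^*\rangle$) matches the paper's argument, and your second step is a correct and rather pleasant alternative: pushing $\CG^\mu \in \langle \Sigma^\tau\CW^* \mid \tau \subseteq \mu^T\rangle$ through the exact functor $p_*q^*$ and using $p_*q^*\Sigma^\tau\CW^* \simeq \Sigma^\tau\CU^*$ does give $\CF^\mu \in \langle \Sigma^\tau\CU^* \mid \tau \subseteq \nu\rangle$. So the inclusion $\langle \CF^\mu \mid \mu \subseteq \nu^T\rangle \subseteq \langle \CE^\lambda \mid \lambda \subseteq \nu\rangle$ is fine.

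The genuine gap is in step~3. You write that the orthogonal complement is ``admissible, hence generated by an exceptional collection'', but this implication is false: admissible subcategories of a category with a full exceptional collection need not themselves admit one. More to the point, even granting admissibility and $K_0 = 0$, you cannot conclude the complement vanishes---this is precisely the phantom phenomenon (Gorchinskiy--Orlov, B\"ohning--Graf von Bothmer--Katzarkov--Sosna), and phantoms do occur inside categories generated by full exceptional collections. So a rank count on $K_0$ cannot upgrade your inclusion to an equality. The paper sidesteps this entirely: it argues symmetrically to your step~1, using the characterization~\eqref{eq:fl} to obtain the \emph{equality} $\langle \CF^\mu \mid \mu \subseteq \nu^T\rangle = \langle \Sigma^\mu\CU \mid \mu \subseteq \nu^T\rangle$ (the $\CF^\mu$ are the equivariant graded left dual to the $\Sigma^\mu\CU$, so they generate the same equivariant subcategory, and one applies the forgetful functor), and then invokes \cite[Lemma~3.7]{Fonarev2022} for the remaining identification $\langle \Sigma^\mu\CU \mid \mu \subseteq \nu^T\rangle = \langle \Sigma^\lambda\CU^* \mid \lambda \subseteq \nu\rangle$. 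Your geometric step~2 is not wrong, but it only yields one inclusion, and closing the other direction seems to require exactly this input.
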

\begin{proof}
    Since, the objects $\CE^\lambda$ form a graded right dual exceptional collection to
    $\langle \Sigma^\lambda\CU^* \mid \lambda\subseteq \nu \rangle_{\bfG}$ in~$D^b_\bfG(\LGr(n, V))$,
    one has $\langle \CE^\lambda \mid \lambda\subseteq \nu \rangle_\bfG
    = \langle \Sigma^\lambda\CU^* \mid \lambda\subseteq \nu \rangle_\bfG$.
    Once we apply the forgetful functor
    from $D^b_\bfG(\LGr(n, V))$ to $D^b(\LGr(n, V))$, we see that $\langle \CE^\lambda \mid \lambda\subseteq \nu \rangle
    = \langle \Sigma^\lambda\CU^* \mid \lambda\subseteq \nu \rangle$ in the
    non-equivariant derived category.\footnote{See also the discussion preceding Corollary~3.8 in \cite{Kuznetsov2016}.}
    In a similar fashion one proves that
    \begin{equation*}
        \langle \CF^\mu \mid \mu\subseteq \nu^T \rangle =  \langle \Sigma^\mu\CU \mid \mu\subseteq \nu^T \rangle.
    \end{equation*}
    It remains to show that $\langle \Sigma^\mu\CU \mid \mu\subseteq \nu^T \rangle = \langle \Sigma^\lambda\CU^* \mid \lambda\subseteq \nu \rangle$,
    which follows from~\cite[Lemma~3.7]{Fonarev2022}.
\end{proof}

\begin{proof}[Proof of Theorem~\ref{thm:dual}]

    Consider the isotropic Grassmannian $\IGr(h, V)$ with its tautological
    bundle $\CW$.
    Fix $\nu\in \YD_{h,n-h}$ and consider the graded poset $\CP = \{\lambda \mid \lambda \subseteq \nu\}$.
    By Proposition~\ref{prop:flgl}, the bundles
    $\langle \Sigma^\lambda\CW^* \mid \lambda \in \CP\rangle$
    form an exceptional collection in $D^b(\IGr(h, V))$.
    Let us apply Lemma~\ref{lm:adj} in the~following setup.
    Put $\CT = \langle \Sigma^\lambda\CW^* \mid \lambda \in \CP\rangle$,
    $E_\lambda = \Sigma^\lambda\CW^*$, $G_\lambda = \CG_\lambda$,
    $\CT'=D^b(\LGr(n, V))$,
    and $F = p_*q^*$, where
    $p$ and $q$ are as in~\eqref{eq:pqh}.
    Denote by $\langle \CTE^\lambda \mid \lambda\in \CP \rangle$ the graded right dual
    collection to $\langle \CF^\mu \mid \mu\subseteq \nu^T \rangle$
    in the non-equivariant derived category.
    By Proposition~\ref{prop:flgl}, $\CF^\lambda = F(G_\lambda)$, so Lemma~\ref{lm:adj} is applicable.
    The conclusion is that $\Ext^\bullet(\CTE^\lambda, p_*\Sigma^\mu\CW^*)\simeq \Ext^\bullet(\Sigma^\lambda\CW^*, \Sigma^\mu\CW^*)$.
    A simple Borel--Bott--Weil computation shows that
    $p_*\Sigma^\mu\CW^*\simeq \Sigma^\mu\CU^*$ for $\mu\subseteq \nu$ (see~\cite[Lemma~A.4]{Fonarev2022}).
    Finally, for $\mu\subseteq \nu$ we conclude that
    \begin{equation}\label{eq:cte}
        \Ext^\bullet(\CTE^\nu, \Sigma^\mu\CU^*) = \begin{cases}
            \kk & \text{if}\ \mu=\nu, \\
            0 & \text{if}\ \mu\subsetneq \nu.
        \end{cases}
    \end{equation}

    Let us recall that our goal is to prove that $\CTE^\nu\simeq \CE^\nu$.
    It was shown in~\cite[Corollary~3.8]{Kuznetsov2016} that
    for all $\lambda, \mu\in \CP$ there is an isomorphism
    \begin{equation*}
        \Ext^\bullet(\CE^\lambda, \Sigma^\mu\CU^*) \simeq \Hom(\Sigma^\lambda\CU^*, \Sigma^\mu\CU^*).
    \end{equation*}
    It follows from the Lagrangian Borel--Bott--Weil theorem (see Section~\ref{ssec:lbbw}) that
    \begin{equation*}
        \Hom(\Sigma^\lambda\CU^*, \Sigma^\mu\CU^*) = \begin{cases}
            \kk & \text{if}\ \lambda=\mu, \\
            0 & \text{if}\ \lambda\not\subseteq\mu.
        \end{cases}
    \end{equation*}
    We conclude that
    \begin{equation}\label{eq:ce}
        \Ext^\bullet(\CE^\nu, \Sigma^\mu\CU^*) = \begin{cases}
            \kk & \text{if}\ \mu=\nu, \\
            0 & \text{if}\ \mu\subsetneq \nu.
        \end{cases}
    \end{equation}

    Let $\nu\in\YD_{h, n-h}$. By Lemma~\ref{lm:gen} we know that $\CTE^\nu, \CE^\nu\in \langle \Sigma^\mu\CU^* \mid \mu\subseteq \nu \rangle$.
    Choose a nontrivial element $\phi\in \Ext^\bullet(\CE^\nu, \Sigma^\nu\CU^*)\simeq \kk$.
    It follows from the discussion preceding Corollary~3.8 in~\cite{Kuznetsov2016} that one has
    an exact triangle in $D^b(\LGr(n, V))$ of the form
    \begin{equation*}
        \CE^\nu \xrightarrow{\phi} \Sigma^\nu\CU^*\to C_\phi\to \CE^\nu[1],
    \end{equation*}
    where the cone $C_\phi$ is in $\langle \Sigma^\mu\CU^* \mid \mu\subsetneq \nu\rangle$.
    Applying the functor $\Hom(\CTE^\nu, -)$ to the previous triangle, from~\eqref{eq:cte} we get
    a morphism $\psi:\CTE^\nu\to \CE^\nu$, which lifts a nontrivial $\xi\in \Hom(\CTE^\nu, \Sigma^\nu\CU^*)\simeq \kk$.
    Consider the~cone of $\psi$:
    \begin{equation*}
        \CTE^\nu \xrightarrow{\psi} \CE^\nu\to C_\psi\to \CTE^\nu[1],
    \end{equation*}
    On the one hand, $C_\psi \in \langle \Sigma^\mu\CU^* \mid \mu\subseteq \nu \rangle$ since both
    $\CTE^\nu, \CE^\nu$ belong to this subcategory. On the other hand, it follows from the construction
    of $\psi$ and formulas~\eqref{eq:cte} and~\eqref{eq:ce} that $\Ext^\bullet(C_\psi, \Sigma^\mu\CU^*) = 0$
    for all $\mu\subseteq \nu$. Thus, $C_\psi=0$, and $\psi$ is an isomorphism.
\end{proof}

\begin{remark}
    Theorem~\ref{thm:dual} shows that the semiorthogonal decomposition~\eqref{eq:kpf}
    is self-dual up to a twist. Namely, if one applies the dualization anti-autoequivalence
    followed by the twist by $\CO(n)$,
    the $h$-th block of the resulting decomposition will be generated
    by the objects $\langle (\CE^{\lambda})^* \mid \lambda \in \YD_{n-h, h}\rangle$.
    Since $(\CE^{\lambda})^* \simeq \CF^\lambda$, we see that the exceptional
    collection generating block $n-h$ maps to the graded left dual
    of the exceptional collection generating block $h$. Thus, the decomposition~\eqref{eq:kpf}
    maps to itself.
\end{remark}

Since $\CF^\lambda$ form a graded left dual exceptional collection to $\CE^\lambda$, there is a
spectral sequence associated to it; namely, spectral sequence~\eqref{eq:ss-graded}
takes the following form.

\begin{corollary}
    For any object $G\in \left\langle \CE^\lambda \mid \lambda \in \YD_{h, n-h}\right\rangle$
    there is a spectral sequence of the form
    \begin{equation}\label{eq:ss-el}
        E^{p,q}_1 = \bigoplus_{\lambda\in\YD_{h,n-h},\ |\lambda|=p}\Ext^{-q}\left(G, \CF^{\lambda^T}\right)^*\!\otimes\CE^{\lambda}\ \Rightarrow\ \mathcal{H}^{p+q}(G).
    \end{equation}
\end{corollary}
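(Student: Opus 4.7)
The plan is to derive this corollary as a direct application of the general graded dual spectral sequence~\eqref{eq:ss-graded} to the setup furnished by Theorem~\ref{thm:dual}. All the ingredients are already in place; the proof is essentially bookkeeping.

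Concretely, I would take $\CT = \langle \CE^\lambda \mid \lambda \in \YD_{h,n-h}\rangle \subseteq D^b(\LGr(n, V))$ as the ambient triangulated category and the exceptional collection indexed by the graded poset $\CP = \YD_{h,n-h}$ (with grading $|\lambda|$ equal to the number of boxes, as in Example~\ref{ex:yhw}). By Theorem~\ref{thm:dual}, the graded left dual collection is $\langle \CF^\mu \mid \mu \in \YD_{n-h,h}^\circ\rangle$; reindexing through the transposition bijection $\lambda \leftrightarrow \lambda^T$ between $\YD_{h,n-h}$ and $\YD_{n-h,h}$, the dual object attached to $\lambda \in \CP$ is $\CF^{\lambda^T}$, and its cohomological grade is $|\lambda^T| = |\lambda|$, so transposition preserves the grading function on the posets. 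Since the objects $\CE^\lambda$ are vector bundles (in particular pure), the general formula~\eqref{eq:ss-graded} applied with $F = \CH^0$ immediately specializes to
\begin{equation*}
    E^{p,q}_1 = \bigoplus_{\lambda \in \YD_{h,n-h},\ |\lambda|=p} \Ext^{-q}\!\left(G, \CF^{\lambda^T}\right)^{\!*} \otimes \CE^\lambda \ \Rightarrow\ \CH^{p+q}(G),
\end{equation*}
which is the desired statement. There is no real obstacle here beyond checking that the grading conventions on $\YD_{h,n-h}$ and $\YD_{n-h,h}^\circ$ agree under transposition, which is immediate.
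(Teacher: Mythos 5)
Your proposal is correct and matches the paper's (implicit) argument exactly: the corollary is stated as an immediate specialization of the graded dual spectral sequence~\eqref{eq:ss-graded} to the collection $\langle \CE^\lambda\rangle$ with graded left dual $\langle \CF^{\lambda^T}\rangle$ furnished by Theorem~\ref{thm:dual}, using that the $\CE^\lambda$ are pure and that $|\lambda^T|=|\lambda|$. Nothing further is needed.
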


In the following section we will use this spectral sequence to prove Theorem~\ref{thm:intro-resol}.

\subsection{Resolutions of irreducible equivariant bundles}\label{ssec:res}
In the final section we produce nice resolutions for irreducible equivariant bundles
on $\LGr(n, V)$ of the from $\Sigma^\lambda \CU^*$ for $\lambda\in\YD_{h, n-h}$ for some $0\leq h\leq n$
in terms of bundles of the form $\CE^\mu$. The proof will be given in Section~\ref{sssec:res},
and before we introduce some material required for our computations.

\subsubsection{Balanced diagrams}
Let $\lambda$ be a Young diagram. Most commonly it is represented by a sequence of integers
$\lambda=(\lambda_1,\ldots,\lambda_k)$ such that $\lambda_1\geq\lambda_2\geq \cdots \geq \lambda_k\geq 0$,
where $\lambda_i$ is the length of the $i$-th row of $\lambda$.
There is an alternative description via hook lengths.
Let us say that $\lambda$ \emph{has rank $s$}\footnote{One also says that $\lambda$ has a {\em Durfee square of size $s$}.}
if $\lambda_s\geq s$ and $\lambda_{s+1}\leq s$. Graphically, $s$ is the size of the largest square that
fits into $\lambda$, or the length of the diagonal of $\lambda$. Let $a_i$ and $b_i$ denote
the number of boxes to the right of the $i$-th diagonal box (including itself) and below it (including itself)
respectively. Classically, $a_i$ and $b_i$ are called the \emph{arm} and the \emph{leg length.}
One could alternatively say that $a_i=\lambda_i-(i-1)$ and $b_i=\max \{1\leq j\leq k\mid \lambda_j\geq i\} - (i-1)$.
Equivalently, $b_i=(\lambda^T)_i - (i-1)$.
We will write $\lambda=(a_1, a_2,\ldots, a_s|b_1, b_2,\ldots, b_s)$ if $\lambda$ has rank $s$ and
its arm and leg lengths are $a_i$ and $b_j$, respectively.

\begin{example}
    The diagram $\lambda = (3, 2, 2, 1) = (3, 1|4, 2)$ has rank $2$.
    \begin{equation*}
        \ydiagram[*(lightgray)]{2, 2} *[*(white)]{3, 2, 2, 1}
    \end{equation*}
\end{example}

\begin{definition}\label{def:balanced}
    A diagram $\lambda=(a_1, a_2,\ldots, a_s|b_1, b_2,\ldots, b_s)$ is called \emph{balanced} if
    \begin{equation*}
        a_i=b_i+1
    \end{equation*}
    for every
    $1\leq i\leq s$. The set of balanced diagrams with $2t$ boxes is denoted by $\mathrm{B}_{2t}$.\footnote{The number of boxes in a balanced diagram is always even.}
\end{definition}

Balanced diagrams are important to us for the following reason.

\begin{proposition}[{\cite[Proposition~2.3.9]{Weyman2003}}]
    Let $E$ be a locally free module over a commutative ring of characteristic zero.
    Then
    \begin{equation*}
        \Lambda^t\left(S^2 E\right) \simeq \bigoplus_{\lambda\in \mathrm{B}_{2t}}\Sigma^\lambda E.
    \end{equation*}
\end{proposition}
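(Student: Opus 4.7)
The plan is to translate the claim into a character identity for $\GL(E)$ and then invoke a classical identity of Littlewood. Since both sides are polynomial functors of $E$ of degree $2t$ and we work in characteristic zero, it suffices to check the isomorphism for $E$ a vector space; both sides then become polynomial representations of $\GL(E)$, which are determined by their formal characters. Writing $x_1,\ldots,x_N$ for the eigenvalues of a generic diagonal element of $\GL(E)$ (with $N=\rk E$ large enough for stability), the proposition reduces to the symmetric-function identity
\begin{equation*}
    e_t[h_2](x) = \sum_{\lambda\in \mathrm{B}_{2t}} s_\lambda(x),
\end{equation*}
where $e_t[h_2]$ is the plethysm of the elementary symmetric function $e_t$ with $h_2 = \sum_{i\leq j}x_ix_j$ (the character of $S^2 E$), and $s_\lambda$ is the Schur polynomial.

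To prove this, I would pass to the generating series in $t$. The formal identity $\sum_{t\geq 0} e_t[f] = \prod_\alpha(1+f^\alpha)$, with $\alpha$ running over the monomial summands of $f$, yields
\begin{equation*}
    \sum_{t\geq 0} e_t[h_2](x) = \prod_{i\leq j}(1+x_ix_j),
\end{equation*}
after which I would invoke the classical Littlewood identity
\begin{equation*}
    \prod_{i\leq j}(1+x_ix_j) = \sum_\lambda s_\lambda(x),
\end{equation*}
where the sum runs over those partitions whose Frobenius coordinates $(a_1,\ldots,a_s|b_1,\ldots,b_s)$ satisfy $a_i=b_i+1$ for every $i$, that is, precisely over the balanced diagrams of Definition~\ref{def:balanced}. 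Matching homogeneous components of degree $2t$ on both sides then completes the argument, noting that balanced diagrams automatically have an even number of boxes.

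An alternative strategy avoids Littlewood's identity entirely: one decomposes $E^{\otimes 2} = S^2 E \oplus \Lambda^2 E$, applies the binomial formula $\Lambda^N(A\oplus B) = \bigoplus_{p+q=N}\Lambda^p A \otimes \Lambda^q B$, and cross-references the known Schur decompositions of $\Lambda^N(E\otimes E)$ (Schur functors indexed by self-conjugate diagrams) and $\Lambda^q(\Lambda^2 E)$ (indexed by the \emph{antibalanced} diagrams with $b_i=a_i+1$), extracting the Schur content of $\Lambda^t(S^2 E)$ by induction on $t$. Either way, the principal obstacle is the combinatorial identification of the indexing set: recognizing the Frobenius condition $a_i=b_i+1$ as exactly what the calculation produces requires a nontrivial Jacobi--Trudi or Pieri-type argument. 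For this reason, the paper simply cites Weyman's Proposition~2.3.9, which is the efficient route.
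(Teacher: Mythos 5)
The paper gives no argument for this proposition at all --- it is quoted directly from Weyman --- so there is nothing internal to compare against; what you have written is a genuine proof where the paper has only a citation. Your main route is correct. The reduction to formal characters of $\GL(E)$ for $E$ a vector space of large dimension is the standard one in characteristic zero (every homogeneous polynomial functor of degree $2t$ over $\QQ$ is a direct sum of Schur functors with multiplicities determined by its character, and this transports back to locally free modules); the character of $\Lambda^t(S^2E)$ is indeed the degree-$2t$ component of $\prod_{i\leq j}(1+x_ix_j)$; and the Littlewood identity $\prod_{i\leq j}(1+x_ix_j)=\sum_\lambda s_\lambda(x)$, summed over partitions with Frobenius coordinates $(b_1+1,\ldots,b_s+1\mid b_1,\ldots,b_s)$, is the classical statement (Macdonald, I.5, Ex.~9), whose indexing set is exactly $\bigsqcup_t \mathrm{B}_{2t}$ by Definition~\ref{def:balanced}; your parenthetical parity check is also right, since $|\lambda|=\sum_i(a_i+b_i-1)=2\sum_i b_i$. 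The one slip is in the alternative strategy you sketch at the end: $\Lambda^N(E\otimes E)$ is \emph{not} a sum of Schur functors indexed by self-conjugate diagrams --- by the dual Cauchy formula it is $\bigoplus_{|\lambda|=N}\Sigma^\lambda E\otimes\Sigma^{\lambda^T}E$, so that route would need a further Littlewood--Richardson expansion and does not work as stated. Since you do not rely on it, your proof stands on the first argument.
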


\begin{lemma}\label{lm:balanced}
    A diagram $\lambda=(\lambda_1,\ldots,\lambda_k)$ of rank $s$ is balanced if and only if $\lambda_s\geq s+1$ and
    \begin{equation*}
        (\lambda_1-(s+1), \lambda_2-(s+1), \ldots, \lambda_s-(s+1))^T = (\lambda_{s+1},\lambda_{s+2},\ldots,\lambda_{k}),
    \end{equation*}
    where, in particular, we assume that $\lambda_s\geq s+1$.
\end{lemma}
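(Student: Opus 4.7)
The plan is to unpack the balanced condition directly in terms of the partition parts $\lambda_i$ and the transposed parts $(\lambda^T)_i$, and then verify both implications by a single combinatorial computation.

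First I would use the stated formulas $a_i = \lambda_i - (i-1)$ and $b_i = (\lambda^T)_i - (i-1)$ to rephrase $a_i = b_i + 1$ as
\[
\lambda_i = (\lambda^T)_i + 1 \quad \text{for all } 1 \le i \le s.
\]
Taking $i = s$ and using that $(\lambda^T)_s \ge s$ by the rank-$s$ assumption immediately yields $\lambda_s \ge s+1$, which is one half of the right-hand side of the lemma. So the real content is to show that, given $\lambda_s \ge s+1$ and writing $\mu = (\lambda_1 - (s+1), \ldots, \lambda_s - (s+1))$ (which is then a genuine partition), the balancedness condition $\lambda_i = (\lambda^T)_i + 1$ for $i \le s$ is equivalent to the identity $\mu^T = (\lambda_{s+1}, \lambda_{s+2}, \ldots, \lambda_k)$.

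For the forward direction, I would simply compute, for any $j \ge 1$,
\[
(\mu^T)_j = \#\{i \le s : \lambda_i \ge s+1+j\} = \#\{i \le s : (\lambda^T)_i \ge s+j\} = \lambda_{s+j},
\]
where the middle equality uses balancedness and the last one uses the standard duality $(\lambda^T)_i \ge s+j \iff \lambda_{s+j} \ge i$ together with the rank bound $\lambda_{s+j} \le s$ (so that no extra constraint $i \le s$ is lost).

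For the converse, assume $\lambda_s \ge s+1$ and $\lambda_{s+t} = (\mu^T)_t$ for $t \ge 1$. Because at least the first $s$ parts of $\lambda$ exceed any $i \le s$, one has $(\lambda^T)_i = s + \#\{t \ge 1 : \lambda_{s+t} \ge i\}$; substituting $\lambda_{s+t} = (\mu^T)_t$ and using involutivity of the transpose on partitions gives $\#\{t \ge 1 : (\mu^T)_t \ge i\} = \mu_i = \lambda_i - (s+1)$, whence $(\lambda^T)_i = \lambda_i - 1$, which is balancedness. The only mild technicality is handling indices $i \le s$ with $\lambda_i = s+1$ (i.e.\ $\mu_i = 0$), where one must confirm that no $\lambda_j$ with $j > s$ can reach $i$; this follows from the bound $\lambda_{s+1} = (\mu^T)_1 = \#\{i' \le s : \mu_{i'} \ge 1\}$, which is strictly less than such an $i$. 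I expect this bookkeeping with trailing zeros to be the only step requiring care; the rest is a direct counting argument.
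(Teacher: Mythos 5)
Your argument is correct. Both implications check out: the translation of $a_i=b_i+1$ into $\lambda_i=(\lambda^T)_i+1$ for $i\le s$ is exactly right, the forward computation $(\mu^T)_j=\#\{i\le s:(\lambda^T)_i\ge s+j\}=\lambda_{s+j}$ uses the duality $\lambda_{s+j}\ge i\iff(\lambda^T)_i\ge s+j$ and the rank bound $\lambda_{s+j}\le s$ correctly, and the converse computation $(\lambda^T)_i=s+\mu_i=\lambda_i-1$ is sound (the worry about $\mu_i=0$ is handled, and is in any case subsumed by the involutivity of transposition). Your route differs from the paper's: the paper first observes that $(a_1,\ldots,a_s\mid b_1,\ldots,b_s)^T=(b_1,\ldots,b_s\mid a_1,\ldots,a_s)$, so that $\lambda$ is balanced if and only if the auxiliary diagram $\mu=(\lambda_1-1,\ldots,\lambda_s-1,\lambda_{s+1},\ldots,\lambda_k)$ is \emph{symmetric}, and then decomposes $\mu$ into its Durfee square, the part $\mu^r$ to its right and the part $\mu^b$ below it, using that transposition fixes the square and swaps $\mu^r$ and $\mu^b$ (transposed) to conclude $\mu=\mu^T\iff(\mu^r)^T=\mu^b$. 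Your proof skips the detour through symmetry and verifies the transposition identity by counting boxes in columns directly; it is more self-contained and makes every index manipulation explicit, at the cost of being a bit more computational, whereas the paper's reduction to symmetric diagrams is shorter once one accepts the Frobenius-coordinate description of the transpose. Either proof is acceptable.
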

\begin{proof}
    Remark that $(a_1, a_2,\ldots, a_s|b_1, b_2,\ldots, b_s)^T=(b_1, b_2,\ldots, b_s|a_1, a_2,\ldots, a_s)$.
    In particular, $\lambda$ is \emph{symmetric}, $\lambda=\lambda^T$, if and only if $a_i=b_i$ for all $1\leq i\leq s$.
    It follows from Definition~\ref{def:balanced} that $\lambda$ is balanced if and only if
    $\lambda_s\geq s+1$ and $\mu=(\lambda_1-1, \lambda_2-1,\ldots, \lambda_s-1, \lambda_{s+1},\ldots, \lambda_{k-1},\lambda_k)$
    is symmetric. Assume the~latter.
    
    The rank of $\mu$ equals that of $\lambda$, which is $s$. One can separate $\mu$ into three
    parts: a square of size~$s$, everything to the right of it, and everything below it.
    The latter two parts are nothing but the diagrams $\mu^r = (\mu_1-s,\mu_2-s,\ldots,\mu_s-s)$ and $\mu^b = (\mu_{s+1},\mu_{s+2},\ldots,\mu_k)$.
    Under transposition the square maps to itself, while $\mu^r$ and $\mu^b$ get interchanged and transposed:
    $(\mu^T)^r=(\mu^b)^T$ and $(\mu^T)^b=(\mu^r)^T$. One immediately concludes that $\mu$ is symmetric
    if and only if $(\mu^r)^T = \mu^b$.
\end{proof}

\subsubsection{Lagrangian Borel–Bott–Weil}\label{ssec:lbbw}
The celebrated Borel--Bott--Weil theorem fully describes the cohomology of irreducible equivariant line
bundles on rational homogeneous varieties. Since we only use it in one place, we formulate
a much simplified version of it and refer the interested reader to~\cite[Chapter~4]{Weyman2003}.

Given a weakly decreasing sequence $\lambda\in\ZZ^n$, we denote by $-\lambda$ the sequence
$(-\lambda_n, -\lambda_{n-1}, \ldots, -\lambda_1)$, which is again weakly decreasing.
The sum of weakly decreasing sequences is defined termwise.
A weakly decreasing sequence is called \emph{non-singular} if the absolute values of all of its
terms are positive and distinct. If $\lambda$ is non-singular, we denote by $\|\lambda\|$
the sequence obtained by taking all the absolute values of the terms of $\lambda$
and writing them in decreasing order. If $\lambda$ is non-singular,
then $\|\lambda\|-\rho$ is a weakly decreasing sequence with non-negative terms,
where $\rho=(n, n-1, \ldots, 1)$. The set of weakly decreasing sequences
$\mu\in\ZZ^n$ with non-negative terms is identified with the set of dominant
weights of $\SP(V)$, where $V$ is a $2n$-dimensional symplectic vector space.
We denote by $V^{\langle\mu\rangle}$ the corresponding irreducible
representation of $\SP(V)$. For instance, if $\mu=(0, 0,\ldots, 0)$, then $V^{\langle\mu\rangle}=\kk$.

\begin{theorem}[Lagrangian Borel--Bott--Weil]\label{thm:lbbw}
    Let $\lambda\in\ZZ^n$ be a weakly decreasing sequence. If $-\lambda+\rho$ is
    non-singular, then
    \begin{equation*}
        H^\bullet(\LGr(n, V), \Sigma^\lambda\CU) = V^{\langle\|-\lambda+\rho\|-\rho \rangle}[-\ell],
    \end{equation*}
    where $\ell$ equals the number of negative terms in $-\lambda+\rho$ plus the number
    of pairs $1\leq i < j\leq n$ such that $(-\lambda+\rho)_i + (-\lambda+\rho)_j < 0$.
    Otherwise, $H^\bullet(\LGr(n, V), \Sigma^\lambda\CU) = 0$.
\end{theorem}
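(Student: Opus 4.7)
The plan is to derive this from the classical Borel--Weil--Bott theorem applied to the homogeneous variety $\LGr(n, V) = \SP(V)/P$, where $P$ is the maximal parabolic stabilizing a fixed Lagrangian subspace, with Levi subgroup $\GL_n$. The tautological bundle $\CU$ is the equivariant bundle associated to the standard representation of this Levi.

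First I would identify $\Sigma^\lambda\CU$ as the $\SP(V)$-equivariant vector bundle associated to the irreducible representation of $P$ whose highest weight on the maximal torus is $-\lambda = (-\lambda_n, -\lambda_{n-1}, \ldots, -\lambda_1)$. This sign flip comes from $(\Sigma^\lambda E)^*\simeq \Sigma^\lambda E^*$ combined with the paper's convention that $\Sigma^\lambda \CU^*$ has highest $\GL_n$-weight $\lambda$; dually, $\Sigma^\lambda \CU$ has highest weight $-w_0\lambda$, which in the paper's notation is $-\lambda$.

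Next I apply the standard Borel--Weil--Bott theorem for $G = \SP(V)$, with $\rho = (n, n-1, \ldots, 1)$ equal to half the sum of positive roots of type $C_n$. The theorem says that the cohomology of the equivariant bundle associated to a weight $\mu$ vanishes unless $\mu + \rho$ is regular (lies in some open Weyl chamber), in which case the cohomology is concentrated in a single degree equal to the length of the unique Weyl element $w$ sending $\mu+\rho$ into the dominant chamber, and the resulting representation is $V^{\langle w(\mu+\rho)-\rho\rangle}$. For $\mu = -\lambda$, regularity for the $C_n$-Weyl group means that no coordinate of $-\lambda+\rho$ is zero and no two coordinates have equal absolute value, which is the paper's "non-singular" condition. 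The dominant chamber for $C_n$ consists of strictly positive, strictly decreasing sequences, so $w(-\lambda+\rho) = \|-\lambda+\rho\|$, giving the stated irreducible $V^{\langle \|-\lambda+\rho\|-\rho\rangle}$.

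Finally, I compute $\ell(w)$ as the number of positive roots of $C_n$ sent by $w$ to negative roots. Write $\nu = -\lambda+\rho$. The long roots $2e_i$ contribute $\#\{i : \nu_i < 0\}$; the short roots $e_i+e_j$ for $i<j$ contribute $\#\{(i,j): i<j,\ \nu_i+\nu_j<0\}$; and the short roots $e_i-e_j$ for $i<j$ would contribute $\#\{(i,j):i<j,\ \nu_i<\nu_j\}$. However, because $\lambda$ is weakly decreasing, so is $-\lambda$ (in the paper's reversed convention), and adding the strictly decreasing sequence $\rho$ makes $\nu$ strictly decreasing; hence this third count vanishes identically. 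Summing the remaining two contributions yields the formula for $\ell$ stated in the theorem.

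The main point requiring care is the sign-convention bookkeeping in the first step: once the correct $P$-weight of $\Sigma^\lambda\CU$ is identified, everything else is a mechanical application of BBW together with the length formula in the hyperoctahedral group. The minor subtlety is noting that the $e_i-e_j$ term drops out automatically, which is exactly why the paper's formula involves only two summands rather than three.
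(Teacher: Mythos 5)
Your proposal is correct, and it is essentially the intended argument: the paper gives no proof of this theorem, stating only that it is a simplified special case and citing Weyman, Chapter~4, where exactly this derivation is carried out. Your bookkeeping checks out at every step — $\Sigma^\lambda\CU\simeq(\Sigma^\lambda\CU^*)^*$ has Levi highest weight $-\lambda=(-\lambda_n,\ldots,-\lambda_1)$, non-singularity of $-\lambda+\rho$ is precisely $C_n$-regularity, and in the length count $\ell(w)=\#\{\alpha>0:\langle -\lambda+\rho,\alpha\rangle<0\}$ the contribution of the roots $e_i-e_j$ indeed vanishes because $-\lambda+\rho=(n-\lambda_n,\ldots,1-\lambda_1)$ is strictly decreasing, leaving exactly the two summands in the stated formula for $\ell$.
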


\subsubsection{Vanishing lemma}
We will need the following result, which is definitely well known to experts.
A~proof is included for the sake of completeness.
\begin{lemma} 
    Let $\lambda\in\YD_{n,n+1}$ be a Young diagram.
    \begin{enumerate}
        \item \label{itm:lgr-van-0} If $\lambda$ is not balanced, then $H^\bullet(\LGr(n, V), \Sigma^\lambda\CU) = 0$.
        \item \label{itm:lgr-van} If $\lambda$ is balanced and $|\lambda|=2t$, then $H^\bullet(\LGr(n, V), \Sigma^\lambda\CU) = \kk[-t]$.
    \end{enumerate}
\end{lemma}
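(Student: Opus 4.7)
The plan is to apply the Lagrangian Borel--Bott--Weil theorem (Theorem~\ref{thm:lbbw}) directly, and to translate non-singularity of the shifted weight into the combinatorial notion of a balanced diagram. Writing $\rho=(n,n-1,\ldots,1)$ and using the strictly decreasing sequence $\beta_i=\lambda_i-i$ for $i=1,\ldots,n$, one checks that the components of $-\lambda+\rho$ are precisely $\{-\beta_n,-\beta_{n-1},\ldots,-\beta_1\}$. Since $\lambda\in\YD_{n,n+1}$, each $|\beta_i|$ lies in $\{0,1,\ldots,n\}$. So non-singularity of $-\lambda+\rho$ forces $\{|\beta_1|,\ldots,|\beta_n|\}$ to be exactly $\{1,2,\ldots,n\}$, whence $\|-\lambda+\rho\|=\rho$ and $\|-\lambda+\rho\|-\rho=0$. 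In particular, whenever $-\lambda+\rho$ is non-singular the theorem returns the trivial representation $\kk$ placed in a single degree; otherwise the cohomology vanishes. So both statements reduce to: (a) non-singularity is equivalent to $\lambda$ being balanced, and (b) in that case the cohomological shift equals $t=|\lambda|/2$.

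For (a) I would use Frobenius coordinates. Let $s$ be the rank of $\lambda$; from the hypothesis $\beta_i\neq 0$ for all $i$ one has $\lambda_s\geq s+1$. Setting $A=\{\beta_i : i\leq s\}$ (the positive part) and $T=\{-\beta_j : j>s\}$, non-singularity is exactly the statement $A\sqcup T=\{1,\ldots,n\}$. Since $\beta_i=a_i-1$ for $i\leq s$, the set $A$ equals $\{a_1-1,\ldots,a_s-1\}$. A direct count of $\lambda^T_i=|\{j:\lambda_j\geq i\}|$ splitting over $j\leq s$ and $j>s$, combined with $A\sqcup T=\{1,\ldots,n\}$, shows that $\lambda^T_i=\lambda_i-1$ for all $i\leq s$, i.e.\ $b_i=a_i-1$, so $\lambda$ is balanced. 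Conversely, for a balanced $\lambda\in\YD_{n,n+1}$ one computes $\beta_i=a_i-1=b_i$ for $i\leq s$ and verifies that the remaining $\beta_j$ for $j>s$ deliver exactly the complementary values in $\{1,\ldots,n\}$, so $-\lambda+\rho$ is non-singular. (Counting both sets also yields $2^n$ elements, parametrised by subsets of $\{1,\ldots,n\}$, which is a sanity check.)

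For (b), in the non-singular case the shift $\ell$ from Theorem~\ref{thm:lbbw} equals the number of negative entries of $-\lambda+\rho$ plus the number of unordered pairs with negative sum. The first summand is $s$ and, among pairs, those involving two negative entries contribute $\binom{s}{2}$. The remaining contribution is the number of pairs $(a,t)\in A\times T$ with $a>t$; writing $A=\{\gamma_1<\cdots<\gamma_s\}$, each $\gamma_r$ contributes $(\gamma_r-1)-(r-1)=\gamma_r-r$ such pairs, giving $\sum A-\binom{s+1}{2}$. Adding up,
\begin{equation*}
\ell \;=\; s+\binom{s}{2}+\sum A-\binom{s+1}{2} \;=\; \sum A.
\end{equation*}
On the other hand, for balanced $\lambda$ one has $|\lambda|=\sum a_i+\sum b_i-s=2\sum b_i=2\sum A$, so $t=|\lambda|/2=\sum A=\ell$, which is exactly what we wanted.

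The arithmetic of step (b) is routine; the main obstacle is the combinatorial equivalence in step (a), i.e.\ correctly matching the $2^n$ balanced diagrams in $\YD_{n,n+1}$ with the $2^n$ non-singular $-\lambda+\rho$ via Frobenius data. Once the bijection and the identification $A=\{b_i\}$ are in place, the shift computation is essentially forced.
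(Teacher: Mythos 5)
Your proposal is correct and follows essentially the same route as the paper's proof: a direct application of the Lagrangian Borel--Bott--Weil theorem, the observation that for $\lambda\in\YD_{n,n+1}$ non-singularity of $-\lambda+\rho$ forces the absolute values of its entries to be exactly $\{1,\dots,n\}$ (so only the trivial representation can appear), the identification of non-singularity with balancedness via Frobenius-type coordinates, and a count of the cohomological shift. The only difference is bookkeeping: you obtain the shift as $\sum A=\sum b_i=|\lambda|/2$ directly from the pair count, whereas the paper counts inversions following Kapranov and arrives at $\tfrac{s(s+1)}{2}+|\lambda^b|$; the two expressions agree.
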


\begin{proof}
    The proof is a direct application of Theorem~\ref{thm:lbbw}.
    In~order to show~(\ref{itm:lgr-van-0}), we need to show that the~weight $-\lambda+\rho$
    is non-singular if and only if $\lambda$ is balanced. Consider the strictly decreasing
    sequence
    \begin{equation}\label{eq:rho-lambda}
        -\lambda+\rho=(n-\lambda_n,\ (n-1)-\lambda_{n-1},\ \ldots,\ 2-\lambda_2,\ 1-\lambda_1).
    \end{equation} 
    Since $0\leq \lambda_i\leq n+1$ for all $i$, all the absolute values of the terms of
    $-\lambda+\rho$ are between $0$ and $n$. Recall that $-\lambda+\rho$ is non-singular
    if and only if all the absolute values of its terms are distinct and positive.
    Assume the latter. Let $s$ denote the rank of $\lambda$. Then the first $n-s$
    terms of~\eqref{eq:rho-lambda} are positive, while the last $s$ terms are non-positive.
    Since the absolute values of the latter cannot be zero (the weight being non-singular),
    we conclude that $\lambda_i\geq s+1$ for $i=1,\ldots,s$.
    Put $\mu_i=\lambda_i-(s+1)$. Then the sequence
    \begin{equation}\label{eq:comb-gr-dual}
        (n-\lambda_n,\ (n-1)-\lambda_{n-1},\ \ldots,\ (s+1)-\lambda_{s+1},\ s + \mu_1,\ (s-1) + \mu_2,\ \ldots,\ 1+\mu_s)
    \end{equation}
    differs from~\eqref{eq:rho-lambda} only in the last $s$ terms: their signs have been changed,
    and their order has been reversed.
    Remark that $(\lambda_{s+1}, \lambda_{s+2}, \ldots, \lambda_n)=\lambda^b\in \YD_{n-s,s}$, while
    $\mu\in \YD_{s, n-s}$. The sequence~\eqref{eq:comb-gr-dual} is very well known to anyone who
    has ever studied Kapranov's work. It follows from \cite[Section~2.7]{Kapranov1984} that all the terms
    in~\eqref{eq:comb-gr-dual} are distinct if and only if $\lambda^b=\mu^T$. The latter is equivalent,
    by Lemma~\ref{lm:balanced}, to saying that $\lambda$ is balanced.

    Let us turn to~(\ref{itm:lgr-van}). Assume that $\lambda$ is balanced. Since the absolute values of the terms
    of $-\lambda+\rho$ take all the values between $1$ and $n$, by the Borel--Bott--Weil theorem the only nonzero
    cohomology will be equal to $\kk$. We only need to determine the degree in which it sits, and the latter
    equals the number of negative terms in $-\lambda+\rho$ plus the number of pairs $1\leq i < j\leq n$ such
    that $(-\lambda+\rho)_i + (-\lambda+\rho)_j < 0$. The first number equals $s$, and we need to compute the second
    number. Remark that if $1\leq j \leq n-s$, then both $(-\lambda+\rho)_i$ and $(-\lambda+\rho)_j$ are positive.
    Thus, we may assume that $n-s < j \leq n$. If $n-s < i\leq n$, then any $i<j\leq n$ contributes to the count
    since both terms are negative. There are $\frac{s(s-1)}{2}$ such pairs. Finally, assume that
    $1\leq i\leq n-s$ and $n-s < j\leq n$. Since $(-\lambda+\rho)_j < 0$, we need to determine when
    $(-\lambda+\rho)_i < -(-\lambda+\rho)_j$. Such pairs correspond precisely to inversions in
    the sequence~\eqref{eq:comb-gr-dual}.
    It is easy to show that the number of those equals $|\lambda^b|$ (see~\cite[Section~2.7]{Kapranov1984}).
    We conclude that for a balanced $\lambda$ the only nontrivial cohomology sits in degree
    $s + \frac{s(s-1)}{2} + |\lambda^b| = \frac{s(s+1)}{2} + |\lambda^b|$. It remains to recall
    that $\lambda$ consists of $\lambda^b$, $\mu=(\lambda^b)^T$, and a rectangle of size $s\times (s+1)$,
    so $|\lambda| = |\lambda^b| + |\mu| + s(s+1)=2|\lambda^b| + s(s+1)$.
\end{proof}

The following is a simple relative version of the previous lemma. As usual, all the functors are derived.

\begin{corollary}\label{cor:van}
    Let $X$ be smooth projective variety, and let $\CV$ be a rank $2k$ symplectic bundle on $X$.
    Consider the relative Lagrangian Grassmannian $\pi:\LGr_X(k, \CV)\to X$. Denote by
    $\CU\subset \pi^*\CV$ the tautological bundle. Let $\nu \in \YD_{k, k+1}$.
    \begin{enumerate}
        \item If $\nu$ is not balanced, then $\pi_*\Sigma^\nu\CU=0$.
        \item If $\nu$ is balanced and $|\nu|=2t$, then $\pi_*\Sigma^\nu\CU = \CO_X[-t]$.
    \end{enumerate}
\end{corollary}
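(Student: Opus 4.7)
The plan is to deduce the corollary from the absolute case (the preceding lemma) by the standard passage from the absolute Lagrangian Grassmannian to a relative one via the associated bundle construction. Let $P\to X$ denote the principal $\SP(\CV)$-bundle of symplectic frames of $\CV$, and fix a model symplectic vector space $V$ of dimension $2k$. Then
\[
\pi\colon \LGr_X(k,\CV)=P\times_{\SP(\CV)}\LGr(k,V)\longrightarrow X
\]
is an étale-locally trivial fibration with fiber $\LGr(k,V)$, and the bundle $\Sigma^\nu\CU$ on $\LGr_X(k,\CV)$ is the bundle associated with the $\SP(V)$-equivariant bundle $\Sigma^\nu\CU_0$ on the fiber, where $\CU_0$ denotes the tautological bundle on $\LGr(k,V)$.

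First I would invoke flat base change along an étale trivialization of $\pi$ (equivalently, the associated bundle formula for pushforward), which yields
\[
R\pi_*\Sigma^\nu\CU \simeq P\times_{\SP(\CV)}R\Gamma\bigl(\LGr(k,V),\Sigma^\nu\CU_0\bigr).
\]
Then I would plug in the absolute lemma on the right-hand side. In case (1), when $\nu$ is not balanced, the absolute lemma gives $R\Gamma(\LGr(k,V),\Sigma^\nu\CU_0)=0$, so $R\pi_*\Sigma^\nu\CU=0$, and in particular $\pi_*\Sigma^\nu\CU=0$. In case (2), when $\nu$ is balanced with $|\nu|=2t$, the absolute lemma gives $R\Gamma(\LGr(k,V),\Sigma^\nu\CU_0)=\kk[-t]$.

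The only remaining point is to identify the one-dimensional $\SP(V)$-representation appearing in degree $t$. This is where the Lagrangian Borel--Bott--Weil theorem (Theorem~\ref{thm:lbbw}) comes in: as observed inside the proof of the previous lemma, for a balanced $\nu\in\YD_{k,k+1}$ the weight $-\nu+\rho$ is non-singular with $\|-\nu+\rho\|=\rho=(k,k-1,\ldots,1)$, so $\|-\nu+\rho\|-\rho=(0,\ldots,0)$ and the cohomology carries the trivial representation $V^{\langle 0\rangle}=\kk$. Hence the associated bundle is the trivial line bundle, and $R\pi_*\Sigma^\nu\CU\simeq\CO_X[-t]$, which upon taking $\pi_*$ gives the stated identity. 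The only place where one has to be slightly careful is tracking the triviality of the $\SP(V)$-action on the one-dimensional top cohomology, but this is automatic from Borel--Bott--Weil as just explained; no substantial obstacle arises.
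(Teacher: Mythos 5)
Your argument is correct and is exactly the standard reduction the paper has in mind: the author omits the proof entirely, calling the corollary ``a simple relative version of the previous lemma,'' and the intended justification is precisely the associated-bundle/relative Borel--Bott--Weil argument you give, with the key point being that the surviving cohomology is the \emph{trivial} $\SP_{2k}$-representation (which, as you note, follows from $\|-\nu+\rho\|=\rho$; it is also automatic since $\SP_{2k}$ admits no nontrivial one-dimensional representations), so the associated line bundle is $\CO_X$.
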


\subsubsection{Skew Schur functors}\label{ssec:skew}
Before we prove the second main result of the paper, we need to say a few
words about skew Young diagrams. Let $\lambda$ and $\mu$ be two Young diagrams such that
$\mu\subseteq \lambda$. Then one can define the \emph{skew Schur functor} $\Sigma^{\lambda/\mu}$,
see~\cite[Section~2.1]{Weyman2003}. The definition itself is not important for the present
paper. What is important is the decomposition~\eqref{eq:skew-schur}.

If $E$ is a free module over a commutative ring of
characteristic zero and $\alpha$ and $\beta$ are two Young diagrams,
one has a direct sum decomposition
\begin{equation*}
    \Sigma^\alpha E\otimes \Sigma^\beta E\simeq \bigoplus_{|\kappa|=|\alpha|+|\beta|}\left(\Sigma^\kappa E\right)^{\oplus c(\alpha,\beta;\kappa)},
\end{equation*}
where $c(\alpha,\beta;\kappa)$ are the celebrated Littlewood--Richardson coefficients.
It turns out that one has a direct sum decomposition for skew Schur functors
controlled by the same coefficients. Namely,
\begin{equation}\label{eq:skew-schur}
    \Sigma^{\lambda/\mu}E \simeq \bigoplus_{|\nu| + |\mu| = |\lambda|}\left(\Sigma^\nu E\right)^{\oplus c(\nu, \mu; \lambda)},
\end{equation}
see~\cite[Theorem~2.3.6]{Weyman2003}.

If $c(\nu, \mu; \lambda)>0$, we will write $\nu\subseteq \lambda/\mu$.
Remark that $\nu\subseteq \lambda/\mu$ implies that $\nu\subseteq \lambda$.
In particular, if~$\lambda\in \YD_{h,w}$, then $\nu\subseteq \lambda/\mu$ implies $\nu\in\YD_{h, w}$.

Inspired by decomposition~\eqref{eq:skew-schur}, for a pair
of diagrams $\lambda, \mu\in \YD_{h,w}$, where $h+w=n$ and $\mu\subseteq\lambda$, we~put
\begin{equation}\label{eq:skew-el}
    \CE^{\lambda/\mu} = \bigoplus_{\nu\subseteq \lambda/\mu}\left(\CE^\nu\right)^{\oplus c(\nu, \mu; \lambda)}.
\end{equation}
We extend the definition, as usual, by setting $\CE^{\lambda/\mu} = 0$ for $\mu\nsubseteq\lambda$.

\subsubsection{Resolutions}\label{sssec:res}
We are ready to present the main application of Theorem~\ref{thm:dual}.

\begin{theorem}\label{thm:resol}
    Let $\lambda\in\YD_{h,n-h}$ for some $0\leq h\leq n$. There is an exact $\SP(V)$-equivariant sequence of vector bundles
    on $\LGr(n, V)$ of the form
    \begin{equation}\label{eq:resol}
        0 \to \bigoplus_{\mu\in \mathrm{B}_{h(h+1)}}\CE^{\lambda/\mu}\to
        \cdots
        \to \bigoplus_{\mu\in \mathrm{B}_{2t}}\CE^{\lambda/\mu}
        \to \cdots\to
        \bigoplus_{\mu\in \mathrm{B}_4}\CE^{\lambda/\mu} \to
        \bigoplus_{\mu\in \mathrm{B}_2}\CE^{\lambda/\mu} \to
        \CE^\lambda\to \Sigma^\lambda\CU^*\to 0,
    \end{equation}
    where $\mathrm{B}_{2t}$ denotes the set of balanced diagrams with $2t$ boxes.
\end{theorem}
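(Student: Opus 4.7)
The plan is to feed $G = \Sigma^\lambda \CU^*$ into the graded-left-dual spectral sequence~\eqref{eq:ss-el}; by Lemma~\ref{lm:gen} $G$ lies in $\langle \CE^\nu \mid \nu \in \YD_{h, n-h}\rangle$, so~\eqref{eq:ss-el} applies. Everything then reduces to computing $\Ext^\bullet(\Sigma^\lambda\CU^*, \CF^{\mu^T})$ for each $\mu \in \YD_{h, n-h}$, and the shape of these Ext groups will force the spectral sequence to collapse to a single complex which turns out to be~\eqref{eq:resol}.

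To evaluate the Ext groups, rewrite $\CF^{\mu^T} = p_* q^* \CG^{\mu^T}$ via Proposition~\ref{prop:flgl}. Adjunction for $p$, the identification $(\Sigma^\lambda \CU^*)^\vee \simeq \Sigma^\lambda \CU$, and the projection formula for $q$ reduce matters to computing $q_*(p^* \Sigma^\lambda \CU)$ on $\IGr(h, V)$. The short exact sequence $0 \to \CW \to \CU \to \CU/\CW \to 0$ on $\IFl(h, n; V)$ equips $\Sigma^\lambda \CU$ with a natural filtration whose associated graded is $\bigoplus_{\alpha, \nu} c(\alpha, \nu; \lambda)\, \Sigma^\alpha \CW \otimes \Sigma^\nu(\CU/\CW)$. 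Since $q$ realises $\IFl(h, n; V)$ as the relative Lagrangian Grassmannian $\LGr(n-h, W^\perp/W) \to \IGr(h, V)$ with $\CU/\CW$ the tautological subbundle on the fibres, the relative version of Corollary~\ref{cor:van} kills every summand with non-balanced $\nu$ and contributes a shift $[-|\nu|/2]$ for balanced ones. Collapsing the remaining sum over $\alpha$ by~\eqref{eq:skew-schur} and pairing against $\CG^{\mu^T}$ through~\eqref{eq:gl} yields
\begin{equation*}
\Ext^{k}(\Sigma^\lambda \CU^*, \CF^{\mu^T}) \simeq \bigoplus_{\nu \in \mathrm{B}_{2(k - |\mu|)}} \kk^{c(\mu, \nu; \lambda)}.
\end{equation*}

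Plugging this into~\eqref{eq:ss-el}, the only non-zero entries on the $E_1$-page sit at bidegrees $(p, q) = (|\mu|, -|\mu| - t)$ with $|\mu| + 2t = |\lambda|$; in particular $|\lambda| - p$ is even and the entries all lie on the antidiagonals $p + q = -t$. A short bookkeeping check on the bidegree $(r, 1 - r)$ of $d_r$ combined with this parity constraint forces $d_r = 0$ for every $r \neq 2$, so the spectral sequence converges at $E_3$ and reduces to the $d_2$-complex $C^\bullet$ whose term in total degree $-t$ is
\begin{equation*}
C^{-t} \;=\; \bigoplus_{\nu \in \mathrm{B}_{2t}} \bigoplus_{\mu} c(\mu, \nu; \lambda)\, \CE^\mu \;=\; \bigoplus_{\nu \in \mathrm{B}_{2t}} \CE^{\lambda/\nu}
\end{equation*}
by~\eqref{eq:skew-el}. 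Since the spectral sequence converges to $\mathcal{H}^\bullet(\Sigma^\lambda \CU^*)$, which vanishes outside degree zero, the complex $C^\bullet$ must be exact in all negative degrees with cokernel $\Sigma^\lambda \CU^*$ at its rightmost term; this is precisely~\eqref{eq:resol}.

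The main technical hurdle is upgrading the filtration on $\Sigma^\lambda \CU$ under $q_*$ to an honest direct sum decomposition, since \emph{a priori} $q_*$ of a filtered sheaf is only filtered. My plan is to verify this fibrewise: the Lagrangian Borel--Bott--Weil theorem forces $H^1$ of the cotautological bundle on each fibre of $q$ to vanish, showing that the extension $0 \to W \to \CU|_{\text{fibre}} \to \CU/\CW \to 0$ splits on every fibre; $\SP(V)$-equivariant rigidity on the homogeneous space $\IGr(h, V)$ then propagates this splitting to a global direct sum decomposition of the push-forward.
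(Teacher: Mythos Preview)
Your approach is essentially the paper's: feed $\Sigma^\lambda\CU^*$ into the spectral sequence~\eqref{eq:ss-el}, compute $\Ext^\bullet(\Sigma^\lambda\CU^*,\CF^{\mu^T})$ via $\CF^{\mu^T}\simeq p_*q^*\CG^{\mu^T}$, adjunction, the filtration on $\Sigma^\lambda\CU$ coming from $0\to\CW\to\CU\to\CU/\CW\to0$, and Corollary~\ref{cor:van}; then observe that the nonzero $E_1$-entries sit at one position per antidiagonal, so the sequence collapses to a single complex whose exactness yields~\eqref{eq:resol}. Your bidegree/parity argument showing only $d_2$ survives is in fact a cleaner version of the paper's ``one term per diagonal'' remark.

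The one place to tighten is your treatment of the ``technical hurdle''. The fibrewise splitting of $0\to W\otimes\CO\to\CU|_{\text{fibre}}\to(\CU/\CW)|_{\text{fibre}}\to 0$ is true but does not by itself globalise: the tautological flag on $\IFl(h,n;V)$ is genuinely non-split, and non-canonical fibrewise splittings need not glue. What actually makes the filtration spectral sequence for $Rq_*(\Sigma^\lambda\CU)$ degenerate is purely equivariant: its differentials are $\bfG$-equivariant maps between bundles of the form $\Sigma^\alpha\CW$ on the homogeneous space $\IGr(h,V)$ with different values of $|\alpha|$, hence zero; equivalently, $Rq_*(\Sigma^\lambda\CU)\in D^b_\bfG(\IGr(h,V))$ and this category is semisimple, so the object is formal. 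This is the ``equivariant rigidity'' you invoke at the end, and it is all you need --- drop the fibrewise detour and appeal to it directly.
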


\begin{proof}
    The vector bundle $\Sigma^\lambda\CU^*$ lies in the subcategory
    $\langle \CE^\lambda \mid \lambda \in \YD_{h, n-h}\rangle$. By Theorem~\ref{thm:dual},
    the collection $\langle \CF^{\mu^T} \mid \mu\in \YD_{h, n-h}\rangle$ is the graded left
    dual to the exceptional collection $\langle \CE^\lambda \mid \lambda \in \YD_{h, n-h}\rangle$, and the~corresponding
    spectral sequence~\eqref{eq:ss-el} is of the form
    \begin{equation}\label{eq:ss-main}
        E^{p,q}_1 = \bigoplus_{\alpha\in\YD_{h,n-h},\ |\alpha|=p}\Ext^{-q}\left(\Sigma^\lambda\CU^*, \CF^{\alpha^T}\right)^*\!\otimes\CE^{\alpha}\ \Rightarrow\ \mathcal{H}^{p+q}(\Sigma^\lambda\CU^*).
    \end{equation}
    
    Let us compute
    $\Ext^\bullet(\Sigma^\lambda\CU^*, \CF^{\mu})$ for $\mu=\alpha^T\in \YD_{n-h, h}$ using
    the isomorphism $\CF^\mu\simeq p_*q^*\CG^\mu$ given in Proposition~\ref{prop:flgl},
    where $p$ and $q$ come from the diagram
    \begin{equation*}
        \begin{tikzcd}[column sep=small]
            & \IFl(h, n; V) \arrow[dl, "p"'] \arrow[dr, "q"] & \\
              \LGr(n, V) & & \IGr(h, V)
        \end{tikzcd}
    \end{equation*}
    In the following we often simply write $\CU$ instead of $p^*\CU$
    and $\Sigma^\lambda\CU^*$ instead of $p^*\Sigma^\lambda\CU^*$.
    One has a~sequence of isomorphisms
    \begin{align*}
        \Ext^\bullet(\Sigma^\lambda\CU^*, \CF^\mu) & \simeq \Ext^\bullet(\Sigma^\lambda\CU^*, p_*q^*\CG^\mu)\\
        & \simeq \Ext^\bullet(\Sigma^\lambda\CU^*, q^*\CG^\mu) \\
        & \simeq H^\bullet(\IFl(h, n; V), \Sigma^\lambda\CU\otimes q^*\CG^\mu) \\
        & \simeq H^\bullet(\IGr(h; V), q_*(\Sigma^\lambda\CU)\otimes \CG^\mu),
    \end{align*}
    where the second isomorphism is given by adjunction, and the last one follows from the projection formula.
    As usual, all the functors are derived.
    
    Consider the spectral sequence associated with the composition of derived functors. Its second page is given by
    \begin{equation}\label{eq:ss-rq}
        H^i(\IGr(h; V), R^jq_*(\Sigma^\lambda\CU)\otimes \CG^\mu)\quad \Rightarrow\quad H^{i+j}(\IGr(h; V), q_*\Sigma^\lambda\CU\otimes \CG^\mu).
    \end{equation}

    Let us compute $R^jq_*(\Sigma^\lambda\CU)$. Recall that we denoted by $\CW\subset \CU$ the universal flag on $\IFl(h, n; V)$.
    There is a filtration of length $|\lambda|$ on $\Sigma^\lambda\CU$ associated with the short exact sequence $0\to \CW\to \CU\to \CU/\CW\to 0$,
    whose $k$-th associated quotient is isomorphic to
    \begin{equation}\label{eq:filt}
        \bigoplus_{\nu\subseteq \lambda,\,|\nu|=k}\Sigma^{\lambda/\nu}\CW\otimes \Sigma^\nu(\CU/\CW),
    \end{equation}
    where if the height of $\nu$ is greater than $n-h$, then $\Sigma^\nu(\CU/\CW)=0$
    by convention.

    Since for any $\nu\subseteq \lambda$ the bundle $\Sigma^{\lambda/\nu}\CW$ is pulled back from $\IGr(h, V)$,
    we conclude by the projection formula that 
    \begin{equation}\label{eq:proj}
        R^jq_*(\Sigma^{\lambda/\nu}\CW\otimes \Sigma^\nu(\CU/\CW)) \simeq \Sigma^{\lambda/\nu}\CW\otimes R^jq_*\Sigma^\nu(\CU/\CW).
    \end{equation}
    
    Recall that $\IFl(h, n; V)$ is the
    relative Lagrangian Grassmannian $\LGr(n-h, \CW^\perp/\CW)$ over $\IGr(h, V)$. Since $\nu\subseteq \lambda$,
    one has $\nu\in\YD_{h,n-h}$. 
    Thus, $\Sigma^{\lambda/\nu}\CW\otimes R^jq_*\Sigma^\nu(\CU/\CW)=0$ unless $\nu\in\YD_{n-h, n-h}$.
    Finally, if~$\nu\in\YD_{n-h, n-h}$, then we are in the situation when we can apply Corollary~\ref{cor:van}.
    We conclude that if $\nu\subseteq\lambda$, then
    \begin{equation}\label{eq:pfnu}
        \Sigma^{\lambda/\nu}\CW\otimes R^\bullet q_*\Sigma^\nu(\CU/\CW) \simeq \begin{cases}
            \Sigma^{\lambda/\nu}\CW[-t] & \text{if}\ \nu\in B_{2t}, \\
            0 & \text{otherwise.}
        \end{cases}
    \end{equation}

    Using~\eqref{eq:proj} and~\eqref{eq:pfnu}, we can compute $R^jq_*(\Sigma^\lambda\CU)$. Indeed, the spectral sequence associated with
    the~filtration with the quotients~\eqref{eq:filt} degenerates in the first page, and
    \begin{equation*} 
        R^jq_*(\Sigma^\lambda\CU) \simeq \bigoplus_{\nu\subseteq \lambda,\, \nu\in {\rm B}_{2j}}\Sigma^{\lambda/\nu}\CW.
    \end{equation*}

    We are ready to get back to the spectral sequence~\eqref{eq:ss-rq}. We have
    \begin{align*}
        H^i(\IGr(h; V), R^jq_*(\Sigma^\lambda\CU)\otimes \CG^\mu)
        & \simeq \bigoplus_{\nu\subseteq \lambda,\, \nu\in {\rm B}_{2j}} H^{i}(\IGr(h; V), \Sigma^{\lambda/\nu}\CW\otimes \CG^\mu) \\
        & \simeq \bigoplus_{\nu\subseteq \lambda,\, \nu\in {\rm B}_{2j}} \Ext^{i}(\Sigma^{\lambda/\nu}\CW^*, \CG^\mu) \\
        & \simeq \bigoplus_{\nu\subseteq \lambda,\, \nu\in {\rm B}_{2j}} \bigoplus_{\kappa\subseteq\lambda/\nu} \Ext^{i}((\Sigma^{\kappa}\CW^*)^{\oplus c(\kappa,\nu;\lambda)}, \CG^\mu),
    \end{align*}
    where the last isomorphism comes from~\eqref{eq:skew-schur}. Since $\kappa\subseteq \lambda/\nu$ implies
    that $\kappa\subseteq \lambda$, using~\eqref{eq:gl} we see that
    $\Ext^{i}(\Sigma^{\kappa}\CW^*, \CG^\mu)$ is equal to $\kk$ if $\kappa=\mu^T$ and $i=|\mu|$,
    and is $0$ otherwise.
    Finally,
    \begin{equation*}
        H^{|\mu|}(\IGr(h; V), R^jq_*(\Sigma^\lambda\CU)\otimes \CG^\mu) \simeq
        \bigoplus_{\nu\in B_{2j}}\kk^{\oplus c(\nu, \mu; \lambda)},
    \end{equation*}
    where $2j=|\nu|=|\lambda|-|\mu|$,
    are the only potentially nontrivial cohomology groups, and the spectral sequence~\eqref{eq:ss-rq} degenerates.
    We conclude that
    \begin{equation*}
        H^{|\mu|+j}(\IGr(h; V), q_*\Sigma^\lambda\CU\otimes \CG^\mu) = \bigoplus_{\nu\in B_{2j}}\kk^{\oplus c(\nu, \mu; \lambda)}.
    \end{equation*}

    Let us return to spectral sequence~\eqref{eq:ss-main}. From the computations above we know that the only nontrivial
    terms in it are
    \begin{equation}\label{eq:epq-main}
        E^{-|\lambda|+t, |\lambda|-2t}_1 \simeq \bigoplus_{\nu\in B_{2t}}\bigoplus_{\substack{\mu\subseteq\lambda/\nu\\ |\mu|=|\lambda|-2t}}\left(\CE^{\mu}\right)^{\oplus c(\nu, \mu; \lambda)}
        = \bigoplus_{\nu\in B_{2t}} \CE^{\lambda/\nu},
    \end{equation}
    where the last equality comes from the definition in~\eqref{eq:skew-el}.
    (One might suspect that there is merely a~filtration with the given associated graded quotient;
    however, the right hand side is a direct
    sum of~objects of the form $\CE^\alpha$ with $\alpha\subseteq \lambda$ and $|\alpha| = |\lambda|-2t$,
    and all such objects are orthogonal in the derived category since the elements in the corresponding poset
    are incomparable.)
    We conclude that the spectral sequence
    contains at most one non-trivial term in each diagonal.
    
    As the spectral sequence converges to $\CH^\bullet(\Sigma^\lambda\CU^*)$, one must have a long exact sequence of the form
    \begin{equation*}
        \cdots \to E^{-|\lambda|+t, |\lambda|-2t}_1\to \cdots \to E^{-|\lambda|+2, |\lambda|-4}_1 \to E^{-|\lambda|+1, |\lambda|-2}_1 \to E^{-|\lambda|, |\lambda|}_1
        \to \Sigma^\lambda\CU^* \to 0.
    \end{equation*}
    Since any balanced diagram contained in $\lambda$ has height at most $h$,
    its width is at most $h+1$ and $E^{-|\lambda|+t, |\lambda|-2t}_1=0$ for $t>h(h+1)/2$.
    It remains to use~\eqref{eq:epq-main} to get the desired long exact sequence~\eqref{eq:resol}.
\end{proof}

\begin{remark}
    One can also obtain resolutions~\eqref{eq:resol} working in the equivariant derived category.
\end{remark}

\begin{example}
    We present some examples of resolutions established in Theorem~\ref{thm:resol}.

    \textbf{$\bm{h=1}$:} In this case $\lambda$ consists of just a single row of length at most $n-1$.
    If $\lambda = (0)$, then $\CE^\lambda\simeq\CO$. If $\lambda = (1)$, then $\CE^\lambda\simeq\Sigma^\lambda\CU^*=\CU^*$.
    The interesting case is $\lambda=(p)$ for some $2\leq p \leq n-1$. Since $(2)$ is the only nontrivial
    balanced diagram of height one, and $(p)/(2)$ = $(p-2)$, resolution~\eqref{eq:resol} takes form
    \begin{equation*}
        0\to \CE^{(p-2)}\to \CE^{(p)}\to S^p\CU^* \to 0.
    \end{equation*}
    As a consequence, we see that $\CE^{(p)}$ has a filtration with the associated graded quotients
    of the form $S^{p-2t}\CU^*$ for all $0\leq t\leq p/2$.

    \textbf{$\bm{w=1}:$} In this case $\lambda$ consists of a column of length at most $n-1$.
    Since there are no nontrivial balanced diagrams of width $1$, we conclude that for $\lambda = (t)^T$
    there is an isomorphism 
    \begin{equation*}
        \CE^{(t)^T}\simeq \Lambda^t\CU^*.
    \end{equation*}

    \textbf{$\bm{\lambda=(3,1)}$:} This is the first case when resolution~\eqref{eq:resol} has length greater than $1$.
    Remark that $(2)\in \mathrm{B}_2$ and $(3,1)\in \mathrm{B}_4$ are the only balanced diagrams contained in $\lambda$.
    Moreover, the only nontrivial Littlewood--Richardson coefficients for $(3,1)/(2)$ are
    $c\left((2), (2); \lambda\right)=c\left((1, 1), (2); \lambda\right) = 1$.
    Thus, one has a resolution of the form
    \begin{equation*}
        0\to \CO \to \CE^{(2)}\oplus \CE^{(1, 1)}\to \CE^{(3, 1)}\to \Sigma^{(3, 1)}\CU^*\to 0.
    \end{equation*}
    Since $\CE^{(1,1)}\simeq \Lambda^2\CU^*$ and $\CE^{(2)}$ fits into a short exact sequence
    $0\to \CO\to \CE^{(2)}\to S^2\CU^*\to 0$, with a little bit of work one can show that
    $\CE^{(3,1)}$ is an extension of the form
    \begin{equation*}
        0\to S^2\CU^* \oplus \Lambda^2\CU^* \to \CE^{(3,1)} \to \Sigma^{(3,1)}\CU^* \to 0.
    \end{equation*}
\end{example}

\printbibliography

\end{document}